\newtheorem{theorem}{Theorem}
\newtheorem{corollary}{Corollary}[theorem]
\newtheorem{remark}{Remark}[section]
\begin{document}

\title{Optimality conditions for an exhausterable function on an exhausterable set
}


\author{Majid E. Abbasov         
}


\affil{
              St. Petersburg State University, SPbSU,\\ 7/9 Universitetskaya nab., St.
              Petersburg, 199034 Russia. \\
              {m.abbasov@spbu.ru, abbasov.majid@gmail.com}           
}


\maketitle

\begin{abstract}
Exhausters are families of convex compact sets that allow one to
represent directional derivative of the studied function at the
considered point in the form of InfMax or SupMin of linear
functions. Functions for which such a representation is valid we
call exhausterable. The class of these functions is quite wide and
contains many nonsmooth ones. The set which is given by
exhausterable function is also called exhausterable.

In the present paper we describe optimality conditions for an
exhausterable function on an exhausterable set. These conditions
can be used for solving of many nondifferentiable optimization
problems. An example that illustrate obtained results is provided.

Keywords: Exhausters, Nonsmooth analysis, Nondifferentiable
optimization, Constrained optimization, Optimality conditions
\end{abstract}

\section*{Introduction}
\label{intro_opt_cond_for_exh_func_exh_set}

Directionally differentiable functions form a wide and important
class of nonsmooth functions. It includes convex functions,
maximum and minimum functions and others.

Subdifferential notion \cite{rock70} can be used effectively for
working with directional derivative and therefore for solution of
convex nonsmoth optimization problems. Researchers tried to
develop approaches also for nonconvex problems. The most known
invented tools are subdifferentials of Clarke
\cite{Clarke_83,Hare13}, Mordukhovich \cite{Mordukhovich_80},
Michel-Penot \cite{Michel-Penot_84,Hiriart-Urruty_99} and others.
The concept of exhausters holds a special place in this series due
to its constructiveness.

The idea of exhausters notion goes back to the works of
Pshenichny, Rubinov and Demyanov. Pshenichny in
\cite{Demyanov-psch80} introduced the definition of upper convex
approximation. Demyanov and Rubinov
\cite{Demyanov-dr82,Demyanov-Rubinov2001} proposed to consider
exhaustive families of upper convex and lower concave
approximations, and then introduced exhauster notion
\cite{Demyanov_optimization_99,Demyanov-dem00a}.

Exhausters are families of convex compact sets that allow one to
represent the directional derivative of the studied function in
the form of InfMax or SupMin of linear functions. Therefore they
provide the same representation for the approximation of a studied
function in the neighbourhood of a considered point. Functions for
which such an expansion is valid we call exhausterable.

Calculus of exhausters was described in \cite{Demyanov-dem00a}.
Formulas of this calculus allows one to build exhausters for a
wide class of functions. Unconstrained optimality conditions in
terms of these families were derived in
\cite{Demyanov-dem00a,Demyanov-dros06,Demyanov-Abbasov_jogo13,Demyanov-Abbasov_IMMO10}.
Constrained optimality conditions for an exhausterable function on
an abstractive theoretical cone were obtained in
\cite{Demyanov-dros05}.

In the present work we study constrained optimality conditions for
exhausterable function on a set which is defined via another
exhausterable function. We get new optimality conditions in terms
of exhausters of these two functions.


The paper is organized as follows. In Section 1 we discuss
directional derivative and exhausters notions. In Section 2 we
consider the statement of the problem and conic approximations of
a feasible set. Optimality conditions for an exhausterable
function on an exhausterable set are given in Section 3. An
illustrative example is provided in Section 4.

\section{Dini and Hadamard directional derivatives. Exhausters}

Let a function $f\colon \mathbb{R}^n \rightarrow  \mathbb{R}$ be
given. The function $f$ is called Dini-differentiable at a point
$x \in \mathbb{R}^n$ in a direction $g \in \mathbb{R}^{n}$, if
there exists the finite limit
\begin{equation*}\label{abbasov_eq13appr}
f^{\prime}_{D}(x,g) = \lim_{\alpha\downarrow 0} \frac{{f(x+\alpha
g)-f(x)}}{\alpha}.
\end{equation*}

The function $f$ is called Hadamard-differentiable at a point $x
\in \mathbb{R}^n$ in a direction $g \in \mathbb{R}^{n}$, if there
exists the finite limit
\begin{equation*}\label{abbsov_eq14appr}
f^{\prime}_{H}(x,g) = \lim_{[\alpha,g^{\prime}] \to
[+0,g]}\frac{f(x+\alpha g^{\prime})-f(x)}{\alpha} .
\end{equation*}

The value $f^{\prime}_{D}(x,g)$ is called the Dini derivative of
the function $f$ at the point $x\in \mathbb{R}^n$ in the direction
$g\in \mathbb{R}^n$, and the value $f^{\prime}_{H}(x,g)$ is called
the Hadamard derivative of the function $f$ at the point $x\in
\mathbb{R}^n$ in the direction $g\in \mathbb{R}^n$. The functions
$f^{\prime}_{D}(x,g) $ and $ f^{\prime}_{H}(x,g)$ are positively
homogeneous (p.h.) as functions of direction $g\in \mathbb{R}^n$.
It is clear that differentiability in the sense of Hadamard
implies differentiability in the sense of Dini. The converse is
not true.

Let $f\colon \mathbb{R}^n\to \mathbb{R}$ be a directionally
differentiable (in the sense of Dini or Hadamard) function and
$h(g)=f^{\prime}(x,g)$ be the corresponding derivative of the
function $f$ at a point $x$ in a direction $g$. Fix $x\in
\mathbb{R}^n$. In case when $h(g)$ is upper semicontinuous as a
function of $g$ it can be written in the form (see
\cite{Demyanov-dr82})
\begin{equation}\label{eq1.2}
h(g)=\inf_{C\in E^\ast} \ \max_{v\in C}\langle v,g\rangle,
\end{equation}
where $E^\ast=E^\ast(x)$ is a family of convex, closed and bounded
sets from $\mathbb{R}^n$.

If $h(g)=f^{\prime}(x,g)$ is lower semicontinuous as a function of
$g$, then it can be expressed as
\begin{equation}\label{eq1.3}
h(g)=\sup_{C\in E_\ast} \ \min_{w\in C} \langle w,g\rangle,
\end{equation}
where $E_\ast=E_\ast(x)$ is a family of convex, closed and bounded
sets from $\mathbb{R}^n$.

The family of sets $E^\ast$ is called an upper exhauster of the
function $f$ at the point $x$ (respectively, in the sense of Dini
or Hadamard), while the family  $E_\ast$ is called a lower
exhauster of the function $f$ at the point $x$ (respectively, in
the sense of Dini or Hadamard).

It is obvious that an exhauster of the function $f$ at the point
$x$ is also the exhauster of the function $h(g)$ at the origin.


In case when $h$ is continuous at $g$, then the both
representations (\ref{eq1.2}) and (\ref{eq1.3}) are true. In
\cite{cas98} it was shown that if $h$ is Lipschitz then this
function can be written both in forms
\begin{equation}\label{eqs3a1}
h(g)=h_1(g)=\min_{C\in E^*} \max_{v\in C}\langle v,g\rangle \quad
\forall g\in \mathbb{R}^n,
\end{equation}
and
\begin{equation}\label{eqs3a2}
h(g)= h_2(g)=\max_{C\in E_*} \min_{w\in C}\langle w,g\rangle \quad
\forall g\in \mathbb{R}^n,
\end{equation}
where the families of sets $E^\ast$ and $E_\ast$ are totally
bounded. Recall that a family of sets $E$ is totally bounded if
there exists a ball $B$ in $\mathbb{R}^n$ such that $$C\subset
B\quad \forall C\in E.$$ The functions $h_1$ and $h_2$ give p.h.
approximations of the increment of the function $f$ in a
neighborhood of the point $x$. In what follows we will deal with
representations (\ref{eqs3a1}) and (\ref{eqs3a2}).

Exhausters were introduced in
\cite{Demyanov_optimization_99,Demyanov-dem00a}. This notion
brought attention of many researchers
\cite{Demyanov-dros08,Kucuk-Urbanski-Grzybowski,Abbasov_jogo_18,Abbasov_jimo15,Murzobekova09,Luderer03}.
It turned out that unconstrained optimality conditions for the
minimum most organically can be expressed in terms of upper
exhausters (see \cite{Demyanov-dem00a,abbasov_jota_2017,
Demyanov-optimization-2012}). Therefore an upper exhauster was
called proper for the minimization problem and adjoint for the
maximization one.

\begin{theorem}\label{exhausters_th_unconstraint_min extremum_conditions_upper_ex}
If a function $f(x)$ attains a local minimum at a point $x_\ast$
and an upper exhauster $E^\ast$ (in the sense of Dini or Hadamard)
of the function $f(x)$ at the point $x_\ast$ is known, then
$$h(g)=f'(x_\ast,g)=\min_{C\in E^\ast}\max_{v\in C}\langle
v,g\rangle\geq 0 \quad\forall g\in\mathbb{R}^n,$$ what is
equivalent to the condition
\begin{equation}\label{exhausters_th_unconstraint_min_proper_exh_min_0}
 0_n\in C \quad \forall \ C\in E^\ast.
\end{equation}
\end{theorem}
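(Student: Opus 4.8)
The plan is to separate the statement into two independent pieces: the necessary condition that a local minimum forces $h(g)\geq 0$ for every direction, and the equivalence between this nonnegativity and the membership condition $0_n\in C$ for all $C\in E^\ast$.

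For the first piece I would use only the definition of the directional derivative. Since $x_\ast$ is a local minimum, for every fixed $g\in\mathbb{R}^n$ and all sufficiently small $\alpha>0$ one has $f(x_\ast+\alpha g)-f(x_\ast)\geq 0$. Dividing by $\alpha>0$ and passing to the limit $\alpha\downarrow 0$ yields $f'(x_\ast,g)\geq 0$, which is exactly $h(g)\geq 0$. This works identically in the Dini and Hadamard cases and is routine.

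For the equivalence, one implication is immediate. If $0_n\in C$ for every $C\in E^\ast$, then for each such $C$ we have $\max_{v\in C}\langle v,g\rangle\geq\langle 0_n,g\rangle=0$, and taking the minimum over $C\in E^\ast$ preserves the inequality, so $h(g)\geq 0$ for all $g\in\mathbb{R}^n$. Here I use the representation $h(g)=\min_{C\in E^\ast}\max_{v\in C}\langle v,g\rangle$ supplied by (\ref{eqs3a1}).

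The converse is where the real work lies, and I would argue by contraposition using a separation theorem. Suppose some $C_0\in E^\ast$ does not contain the origin. Since $C_0$ is convex and compact and $0_n\notin C_0$, the origin can be strictly separated from $C_0$, so there exists a direction $\bar g$ with $\langle v,\bar g\rangle<0$ for all $v\in C_0$; by compactness of $C_0$ this gives $\max_{v\in C_0}\langle v,\bar g\rangle<0$. Consequently
\[
h(\bar g)=\min_{C\in E^\ast}\max_{v\in C}\langle v,\bar g\rangle\leq\max_{v\in C_0}\langle v,\bar g\rangle<0,
\]
which contradicts $h(\bar g)\geq 0$. Hence every set of the exhauster must contain the origin. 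The main obstacle is precisely this application of the separation theorem (and the use of compactness to turn a strict pointwise inequality into a strictly negative maximum); once that is in place the remaining steps are just bookkeeping.
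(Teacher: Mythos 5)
Your argument is correct and complete: the reduction of the local-minimum hypothesis to $f'(x_\ast,g)\geq 0$ via the difference quotient, the easy direction from $0_n\in C$ to nonnegativity of the InfMax, and the contrapositive via strict separation of the origin from a compact convex $C_0$ (using compactness to get $\max_{v\in C_0}\langle v,\bar g\rangle<0$) are all sound. Note that the paper itself states this theorem without proof, citing the earlier literature of Demyanov et al.; your proof is the standard one found there, so there is no divergence of approach to report.
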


\begin{theorem}\label{exhausters_th_unconstraint_max extremum_conditions_upper_ex}
If a function $f(x)$ attains a local maximum at a point $x^\ast$
and an upper exhauster $E^\ast$ (in the sense of Dini or Hadamard)
of the function $f(x)$ at the point $x^\ast$ is known, then
$$h(g)=f'(x^\ast,g)=\min_{C\in E^\ast}\max_{v\in C}\langle
v,g\rangle\leq 0 \quad\forall g\in\mathbb{R}^n,$$ what is
equivalent to the condition that for every $g\in \mathbb{R}^n$
there exists a set $C(g)\in E^*$ such that
\begin{equation*}\label{exhausters_th_unconstraint_max_adjoint_exh_max_0}
\langle v,g\rangle\geq 0 \quad \forall v\in C(g).
\end{equation*}
\end{theorem}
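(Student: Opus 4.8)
The plan is to establish the two assertions in sequence: first that a local maximum forces the derivative to be nonpositive in every direction, and then that this inequality is equivalent to the stated supporting-set condition.

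For the first part I would argue directly from the definition of the directional derivative. Since $x^\ast$ is a local maximum, for every fixed $g$ one has $f(x^\ast+\alpha g)\le f(x^\ast)$ for all sufficiently small $\alpha>0$, so each difference quotient $\tfrac{f(x^\ast+\alpha g)-f(x^\ast)}{\alpha}$ is nonpositive; passing to the limit gives $f^{\prime}_{D}(x^\ast,g)\le 0$. In the Hadamard case the same estimate holds along any $[\alpha,g^{\prime}]\to[+0,g]$, because the points $x^\ast+\alpha g^{\prime}$ remain in the neighborhood on which $x^\ast$ dominates, so $f^{\prime}_{H}(x^\ast,g)\le 0$ as well. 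By the representation (\ref{eqs3a1}) this is precisely $\min_{C\in E^\ast}\max_{v\in C}\langle v,g\rangle\le 0$ for every $g\in\mathbb{R}^n$.

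For the equivalence I would exploit the substitution $g\mapsto -g$, which ranges over all of $\mathbb{R}^n$, so that the condition $h(g)\le 0$ for all $g$ coincides with $h(-g)\le 0$ for all $g$. Using $\max_{v\in C}\langle v,-g\rangle=-\min_{v\in C}\langle v,g\rangle$ rewrites $h(-g)\le 0$ as $\max_{C\in E^\ast}\min_{v\in C}\langle v,g\rangle\ge 0$. To produce a single set I would invoke that in (\ref{eqs3a1}) the families are totally bounded and the sets $C$ are convex and compact, so the outer minimum is attained: let $C(g)\in E^\ast$ realize $h(-g)=\min_{C\in E^\ast}\max_{v\in C}\langle v,-g\rangle$. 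Since $h(-g)\le 0$, we get $\max_{v\in C(g)}\langle v,-g\rangle\le 0$, that is $\langle v,g\rangle\ge 0$ for all $v\in C(g)$, which is the asserted condition. For the converse, given such sets for every direction, I would apply the hypothesis to $-g$ to obtain $C(-g)$ with $\langle v,g\rangle\le 0$ on $C(-g)$, whence $h(g)\le\max_{v\in C(-g)}\langle v,g\rangle\le 0$.

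I expect the only delicate points to be the sign bookkeeping — keeping track of how $g\mapsto -g$ interchanges the inner $\max$ and $\min$ and flips the sign of the inequality — and the appeal to attainment of the outer minimum in (\ref{eqs3a1}), which is what lets me pass from an inequality about $\min_{C}$ to the existence of a concrete set $C(g)$ and back. Everything else reduces to unwinding the definition of the maximum of a linear functional over a compact set.
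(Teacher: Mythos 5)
Your argument is correct. The paper itself gives no proof of this theorem --- it is quoted as a known result from the literature on exhausters (the references around Theorem~\ref{exhausters_th_unconstraint_min extremum_conditions_upper_ex}) --- so there is no in-paper proof to compare against; your route (difference quotients nonpositive at a local maximum, then the substitution $g\mapsto -g$ together with attainment of the outer minimum to extract the set $C(g)$) is the standard one and the sign bookkeeping checks out. The only caveat worth noting is your justification of attainment: total boundedness of the family plus compactness of each $C$ does not by itself force the outer infimum over an infinite family to be attained; rather, the $\min$ form is part of what representation (\ref{eqs3a1}) (and the theorem's own statement) asserts, so you should simply invoke it as a hypothesis rather than derive it.
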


Symmetric is the situation with a lower exhauster. This family was
called proper for the maximization problem and adjoint for the
minimization one.

\begin{theorem}\label{exhausters_th_unocnstraint_max extremum_conditions}
If a function $f(x)$ attains a local maximum at a point $x^{\ast}$
and a lower exhauster $E_\ast$ (in the sense of Dini or Hadamard)
of the function $f(x)$ at the point $x^{\ast}$ is known, then
$$h(g)=f'(x^\ast,g)=\max_{C\in E_\ast}\min_{v\in C}\langle
v,g\rangle\leq 0 \quad\forall g\in\mathbb{R}^n,$$ is equivalent to
the condition
\begin{equation*}\label{Quasi-Exhastrs_exhausters_proper_exh_max_0}
 0_n\in C \quad \forall \ C\in E_\ast.
\end{equation*}
\end{theorem}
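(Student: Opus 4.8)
The plan is to split the statement into two parts: first the necessary condition $h(g)\le 0$, and then the announced equivalence between ``$h(g)\le 0$ for all $g$'' and the inclusion ``$0_n\in C$ for every $C\in E_\ast$''.

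First I would establish the necessary condition directly from the definition of a local maximum. Since $x^\ast$ is a local maximizer, for every fixed direction $g\in\mathbb{R}^n$ and all sufficiently small $\alpha>0$ we have $f(x^\ast+\alpha g)\le f(x^\ast)$, so the difference quotient $\frac{f(x^\ast+\alpha g)-f(x^\ast)}{\alpha}$ is nonpositive; passing to the limit as $\alpha\downarrow 0$ (and likewise along $[\alpha,g']\to[+0,g]$ in the Hadamard case) yields $f'(x^\ast,g)\le 0$. Substituting the lower-exhauster representation (\ref{eqs3a2}) then gives $\max_{C\in E_\ast}\min_{v\in C}\langle v,g\rangle\le 0$ for every $g\in\mathbb{R}^n$.

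Next I would prove the equivalence. The implication from the inclusion to the inequality is immediate: if $0_n\in C$ for every $C\in E_\ast$, then $\min_{v\in C}\langle v,g\rangle\le\langle 0_n,g\rangle=0$ for each $C$, and taking the supremum over $C\in E_\ast$ preserves the bound. For the converse, fix an arbitrary $C\in E_\ast$; since this particular set is one of the competitors in the outer maximum, $\min_{v\in C}\langle v,g\rangle\le\max_{C'\in E_\ast}\min_{v\in C'}\langle v,g\rangle\le 0$ for every $g$.

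The key (and only nontrivial) step is then to deduce $0_n\in C$ from the condition $\min_{v\in C}\langle v,g\rangle\le 0$ for all $g$. Here I would argue by contradiction using a separation theorem: if $0_n\notin C$, then, $C$ being convex, closed and bounded (hence compact), there exist $g_0\ne 0_n$ and $\beta>0$ with $\langle v,g_0\rangle\ge\beta>0$ for all $v\in C$; consequently $\min_{v\in C}\langle v,g_0\rangle\ge\beta>0$, contradicting the previous line. Hence $0_n\in C$, and since $C\in E_\ast$ was arbitrary the inclusion holds for the whole family. This separation step is the main obstacle, though a mild one: it relies entirely on the compactness of the members of $E_\ast$, which guarantees both that the inner minimum is attained and that strict separation of the origin from $C$ is available. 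The overall structure mirrors the argument for the upper-exhauster minimum case of Theorem~\ref{exhausters_th_unconstraint_min extremum_conditions_upper_ex}, with the roles of max and min and the direction of the inequalities reversed.
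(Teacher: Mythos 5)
Your proof is correct. The paper states this theorem without proof, as a known result recalled from the exhauster literature; your argument --- deducing $f'(x^\ast,g)\leq 0$ from the definition of a local maximum, reducing the equivalence to the condition $\min_{v\in C}\langle v,g\rangle\leq 0$ for each fixed $C\in E_\ast$ and all $g$, and then applying strict separation of the origin from the convex compact set $C$ --- is the standard one and matches the approach used for the companion results (Theorems~\ref{exhausters_th_unconstraint_min extremum_conditions_upper_ex}--\ref{exhausters_th_unconstraint_max extremum_conditions_lower_ex}) in the cited sources.
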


\begin{theorem}\label{exhausters_th_unconstraint_max extremum_conditions_lower_ex}
If a function $f(x)$ attains a local minimum at a point $x_\ast$
and a lower exhauster $E_\ast$ (in the sense of Dini or Hadamard)
of the function $f(x)$ at the point $x_\ast$ is known, then
$$h(g)=f'(x_\ast,g)=\max_{C\in E_\ast}\min_{v\in C}\langle
v,g\rangle\geq 0 \quad\forall g\in\mathbb{R}^n,$$ what is
equivalent to the condition that for every $g\in \mathbb{R}^n$
there exists a set $C(g)\in E_\ast$ such that
\begin{equation}\label{exhausters_th_unconstraint_min_adjoint_exh_min_0}
\langle v,g\rangle\geq 0 \quad \forall v\in C(g).
\end{equation}
\end{theorem}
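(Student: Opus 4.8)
The plan is to prove Theorem~\ref{exhausters_th_unconstraint_max extremum_conditions_lower_ex} by the same two-step template that governs the three preceding theorems: first establish the necessary condition on the directional derivative from the local optimality, then show that the stated inequality is equivalent to the geometric condition on the exhauster sets. Since $f$ attains a local minimum at $x_\ast$, for every direction $g \in \mathbb{R}^n$ the increment $f(x_\ast + \alpha g) - f(x_\ast)$ is nonnegative for all sufficiently small $\alpha > 0$, so dividing by $\alpha$ and passing to the limit gives $f'(x_\ast, g) \geq 0$. Using the lower exhauster representation~(\ref{eqs3a2}), this reads
\begin{equation*}
h(g) = \max_{C\in E_\ast}\min_{w\in C}\langle w,g\rangle \geq 0 \quad \forall g\in\mathbb{R}^n,
\end{equation*}
which is the first displayed assertion.

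The core of the argument is then the equivalence between this inequality and the condition that for every $g$ there is a set $C(g)\in E_\ast$ with $\langle v,g\rangle \geq 0$ for all $v\in C(g)$. For the forward direction, I would fix an arbitrary $g$ and use that the maximum over $C\in E_\ast$ is attained (or approached) at some set; since $\max_{C}\min_{w\in C}\langle w,g\rangle \geq 0$, there must exist a set $C(g)$ for which $\min_{w\in C(g)}\langle w,g\rangle \geq 0$, and because this minimum is taken over all $w\in C(g)$, it yields $\langle v,g\rangle \geq 0$ for every $v\in C(g)$. The converse is immediate: if such a $C(g)$ exists, then $\min_{w\in C(g)}\langle w,g\rangle \geq 0$, and the maximum over the whole family is at least this value, so $h(g)\geq 0$.

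The step that requires the most care is the attainment of the maximum in the forward direction. Because the family $E_\ast$ is only totally bounded rather than finite, the supremum over $C\in E_\ast$ of the continuous (in $C$, under the Hausdorff metric) functional $\min_{w\in C}\langle w,g\rangle$ need not be achieved by a member of the family. I would handle this either by invoking that the exhauster can be taken closed in the appropriate sense so that the $\max$ in~(\ref{eqs3a2}) is genuinely attained, or by arguing through a maximizing sequence $C_k$ with $\min_{w\in C_k}\langle w,g\rangle \to h(g) \geq 0$ and noting that for the strict inequality the conclusion holds directly, while the boundary case can be reduced to the same geometric statement by a limiting argument on the supporting values. This is the only place where the topological structure of the family, as opposed to pure algebra of the inf--sup formula, genuinely enters the proof.
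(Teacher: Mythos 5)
The paper does not actually prove this theorem: it is one of the four unconstrained optimality conditions recalled in Section~1 from the cited literature, so there is no in-paper argument to compare yours against. Your proof is correct and is the standard one: local minimality gives $f'(x_\ast,g)\geq 0$ for every $g$ by the elementary increment argument, and the equivalence with the existence of $C(g)$ is the direct max--min manipulation you describe (forward direction via attainment of the maximum, converse trivially).

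One remark on the single delicate point you flag. If the supremum over $E_\ast$ were genuinely not attained, your fallback ``limiting argument'' for the boundary case $h(g)=0$ would not close the gap: one can have $\min_{w\in C}\langle w,g\rangle<0$ for \emph{every} $C\in E_\ast$ while the supremum equals $0$, in which case no $C(g)$ with the required property exists and the asserted equivalence would be false. So of the two options you offer, only the first is viable: the theorem is stated with $\max$, i.e.\ attainment is built into the representation (\ref{eqs3a2}), which the paper guarantees for Lipschitz directional derivatives via Castellani's result on totally bounded families. With that reading your argument is complete; without it, the boundary case cannot be repaired by passing to a maximizing sequence.
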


\section{Problem statement. Conic approximations}

Let functions $f\colon\mathbb{R}^n\to\mathbb{R}$ and
$u\colon\mathbb{R}^n\to\mathbb{R}$ be directionally differentiable
in the sense of Hadamard. Consider the problem
\begin{equation}\label{abbasov_problem_statement_exhaust_on_exhaust_set}
\begin{cases}
f(x)\to \min\\
x\in\Omega
\end{cases}
\end{equation} where $\Omega=\{x\in\mathbb{R}^n\mid u(x)\leq 0\}$.
Since $f$ and $u$ are Hadamard-differentiable both representations
(\ref{eqs3a1}) and (\ref{eqs3a2}) are valid (see Theorems 5.1 and
3.2 in \cite{Demyanov-dr82}) for the derivatives
$f^{\prime}_{H}(x,g)$ and $u^{\prime}_{H}(x,g)$ at the studied
point $x$.

We need conic approximation of the set $\Omega$ in the
neighborhood of the studied point $x$ to derive optimality
conditions for the problem
(\ref{abbasov_problem_statement_exhaust_on_exhaust_set}). Remind
some definitions.

$K_p(x)$ is called cone of possible directions with respect to the
set $\Omega$ at point $x$ if for any $y\in K_p(x)$ there exists
$\overline{\theta}>0$ such that $x+\theta y\in\Omega$ for all
$\theta\in[0,\overline{\theta}]$.

$K_{ad}(x)$ is called cone of admissible directions (or Bouligand
cone) with respect to the set $\Omega$ at the point $x$ if for any
$y\in K_{ad}(x)$ there exists $[\theta_k,y_k]\to [+0,y]$, where
$\theta_k \geq 0$, such that $x+\theta_k y_k\in\Omega$.

Also define cones
$$K_{<}(x)=\left\{y\in\mathbb{R}^n\mid
u^{\prime}_{H}(x,g)<0\right\},\quad
K_{\leq}(x)=\left\{y\in\mathbb{R}^n\mid
u^{\prime}_{H}(x,g)\leq0\right\}.$$ Since the derivative
$u^{\prime}_{H}(x,g)$ is continuous as a function of direction
\cite{Demyanov-dr82}, the cone $K_{<}(x)$ is open (if it is not
empty) and the cone $K_{\leq}(x)$ is closed.

It can be checked easily that
\begin{equation*}\label{abbasov_cones_inclusion_exhaust_on_exhaust_set}
K_{<}(x)\subset K_p(x) \subset K_{ad}(x) \subset
K_{\leq}(x).\end{equation*}

We say that the regularity condition holds at the point $x$ if
$$\operatorname{cl}\left\{K_{<}(x)\right\}=K_{\leq}(x).$$
where $\operatorname{cl}\left\{K_{<}(x)\right\}$ is the closure of
the cone $K_{<}(x)$. This condition provides constructive way for
building the cone $K_{ad}(x)$ which is used in optimal conditions.


\section{Optimality conditions}

We will need the following results (see \cite{Demyanov-dr82}).

\begin{theorem} \label{ch_info_th_exstremum_DH_min} Let $f$ be be directionally differentiable in the sense of Hadamard at the point $x_\ast\in\Omega$. For the point $x_\ast$ to be a local minimizer of $f$ on $\Omega$,
it is necessary that
\begin{equation}\label{ch_info_th_exstremum_DH_min_1}f^{\prime}_H(x_\ast,g)\geq 0 \quad \forall g\in K_{ad}(x_\ast),\end{equation}
where $K_{ad}(x_\ast$) is the Bouligand cone to the set $\Omega$
at the point $x_\ast$.
\end{theorem}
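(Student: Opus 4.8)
The plan is to prove the contrapositive: assume the necessary condition fails and construct a feasible descent path, contradicting that $x_\ast$ is a local minimizer. Suppose there exists a direction $g\in K_{ad}(x_\ast)$ with $f^{\prime}_H(x_\ast,g)<0$. Since $g$ belongs to the Bouligand cone, by the very definition of $K_{ad}(x_\ast)$ there exist sequences $\theta_k\geq 0$ and $y_k$ with $[\theta_k,y_k]\to[+0,g]$ such that $x_\ast+\theta_k y_k\in\Omega$ for all $k$. This gives a sequence of feasible points approaching $x_\ast$ along (approximately) the direction $g$, which is exactly what we need to exploit the negativity of the derivative.

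First I would invoke Hadamard differentiability of $f$ at $x_\ast$. The key feature of the Hadamard derivative, as opposed to the Dini derivative, is that the limit is taken jointly over $[\alpha,g^{\prime}]\to[+0,g]$; this is precisely what allows us to substitute the variable direction $y_k$ in place of a fixed direction. Applying the definition of $f^{\prime}_H(x_\ast,g)$ along the feasible sequence $[\theta_k,y_k]\to[+0,g]$ yields
\begin{equation*}
\lim_{k\to\infty}\frac{f(x_\ast+\theta_k y_k)-f(x_\ast)}{\theta_k}=f^{\prime}_H(x_\ast,g)<0.
\end{equation*}
Hence for all sufficiently large $k$ the difference quotient is negative, and since $\theta_k>0$ (we may discard the terms where $\theta_k=0$, as those do not move the point), we conclude $f(x_\ast+\theta_k y_k)<f(x_\ast)$.

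This directly contradicts the assumption that $x_\ast$ is a local minimizer of $f$ on $\Omega$: the points $x_{k}:=x_\ast+\theta_k y_k$ are feasible (they lie in $\Omega$), they converge to $x_\ast$ as $k\to\infty$ (because $\theta_k\to+0$ and $y_k\to g$ keep $\theta_k y_k\to 0_n$), yet they attain strictly smaller values of $f$. Thus no neighborhood of $x_\ast$ can have $f(x_\ast)$ as its minimum over $\Omega$, and the necessity of \eqref{ch_info_th_exstremum_DH_min_1} follows.

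I expect the main subtlety to be the interplay between the variable direction $y_k$ and the Hadamard derivative rather than any deep estimate: the whole argument hinges on the fact that the Bouligand cone supplies feasible approach directions $y_k$ that need not equal $g$, and only the Hadamard (not the Dini) derivative is robust enough to pass to the limit along such perturbed directions. A minor bookkeeping point is to ensure that infinitely many $\theta_k$ are strictly positive so that the difference quotient is well defined; if $g\neq 0_n$ this is automatic since $x_\ast+\theta_k y_k=x_\ast$ would force $\theta_k=0$, and the case $g=0_n$ makes the inequality \eqref{ch_info_th_exstremum_DH_min_1} trivial by positive homogeneity.
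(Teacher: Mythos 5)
Your argument is correct. Note that the paper itself offers no proof of this theorem -- it is imported verbatim from Demyanov and Rubinov's book (the paper merely says ``We will need the following results'') -- so there is nothing internal to compare against; your contrapositive argument, using the joint limit in the Hadamard derivative to pass along the feasible sequence $[\theta_k,y_k]\to[+0,g]$ supplied by the Bouligand cone, is exactly the standard proof of this classical necessary condition, and your side remarks on discarding $\theta_k=0$ and on the trivial case $g=0_n$ correctly dispose of the only bookkeeping issues.
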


\begin{theorem} \label{ch_info_th_exstremum_DH_max} Let $f$ be be directionally differentiable in the sense of Hadamard at the point $x^\ast\in\Omega$. For the point $x^\ast$ to be a local maximizer of $f$ on $\Omega$,  it is necessary that
\begin{equation}\label{ch_info_th_exstremum_DH_max_1}f^{\prime}_H(x^\ast,g)\leq 0 \quad \forall g\in K_{ad}(x^\ast),\end{equation}
where $K_{ad}(x^\ast$) is the Bouligand cone to the set $\Omega$
at the point $x^\ast$.
\end{theorem}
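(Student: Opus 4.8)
The plan is to argue directly from the definition of the Bouligand cone together with the definition of the Hadamard directional derivative. First I would fix an arbitrary direction $g\in K_{ad}(x^\ast)$. By the definition of the cone of admissible directions there exists a sequence $[\theta_k,g_k]\to[+0,g]$ with $\theta_k>0$ such that $x^\ast+\theta_k g_k\in\Omega$ for every $k$. Since $\theta_k\downarrow 0$ and $g_k\to g$, the increments $\theta_k g_k$ tend to $0_n$, so the feasible points $x^\ast+\theta_k g_k$ converge to $x^\ast$; consequently, for all sufficiently large $k$ they lie in the neighborhood in which $x^\ast$ is a maximizer of $f$ on $\Omega$.

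Next I would invoke local maximality. For those large $k$ we have $f(x^\ast+\theta_k g_k)\leq f(x^\ast)$, and dividing by $\theta_k>0$ gives
$$\frac{f(x^\ast+\theta_k g_k)-f(x^\ast)}{\theta_k}\leq 0.$$
Passing to the limit as $k\to\infty$ and recalling that $f^{\prime}_H(x^\ast,g)$ is, by definition, the limit of such difference quotients taken over arbitrary sequences $[\alpha,g^{\prime}]\to[+0,g]$, I obtain $f^{\prime}_H(x^\ast,g)\leq 0$. As $g$ was an arbitrary element of $K_{ad}(x^\ast)$, this establishes (\ref{ch_info_th_exstremum_DH_max_1}).

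The single point requiring care — and the reason Hadamard (rather than merely Dini) differentiability is assumed here — is that the Bouligand cone forces the approximating directions $g_k$ to vary, not only the step $\theta_k$. The Dini derivative controls difference quotients only along a fixed direction $g$ and would not in general coincide with $\lim_k[f(x^\ast+\theta_k g_k)-f(x^\ast)]/\theta_k$ when $g_k\neq g$. The Hadamard derivative, being defined exactly as the limit over all sequences $[\alpha,g^{\prime}]\to[+0,g]$, captures the behaviour along the very curves that generate $K_{ad}(x^\ast)$, which is what legitimizes the passage to the limit and constitutes the crux of the argument.
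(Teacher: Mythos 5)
Your argument is correct and is precisely the standard proof of this classical result; the paper states Theorem \ref{ch_info_th_exstremum_DH_max} without proof, deferring to \cite{Demyanov-dr82}, where the same reasoning (take a feasible sequence $x^\ast+\theta_k g_k$ furnished by the Bouligand cone, use local maximality to bound the difference quotients by zero, divide by $\theta_k>0$, and pass to the limit via the Hadamard derivative) is used. Your concluding observation --- that Hadamard rather than merely Dini differentiability is what legitimizes taking the limit along the varying directions $g_k$ --- is exactly the right point to single out as the crux.
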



Now we can formulate necessary optimality conditions for the
problem (\ref{abbasov_problem_statement_exhaust_on_exhaust_set}).
We consider in detail only conditions for the minimum, since
conditions for the maximum can be derived similarly.

First state minimum conditions in terms of (proper) upper
exhauster of the function $f$.

\begin{theorem}\label{abbasov_th_min_cond_up_f_low_u}
Let the regularity condition holds at the point
$x_{\ast}\in\Omega$, families of sets $E^{\ast}(f)$ and
$E_{\ast}(u)$ be an upper and a lower exhausters in the sense of
Hadamard of the functions $f$ and $u$ at the point $x_\ast$
respectively. Then for the point $x_\ast$ to be a local minimum of
the function $f$ on the set $\Omega$ it is necessary that
\begin{equation}\label{abbasov_th_min_cond_up_f_low_u_condition}
\bigcap_{C\in
E_{\ast}(u)}\operatorname{cl}\left\{\mathbb{R}^n\setminus
K^+(C)\right\}\subset \bigcap_{C\in
E^{\ast}(f)}\operatorname{cl}\left\{\mathbb{R}^n\setminus
\left(-K^+(C)\right)\right\},
\end{equation}
where $K(C)=\operatorname{cone}\{C\}$ is the conic hull of set
$C$, $K^+(C)$ is the conjugate cone of $K(C)$.
\end{theorem}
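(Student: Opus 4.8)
The plan is to start from the abstract necessary condition of Theorem \ref{ch_info_th_exstremum_DH_min}, use the regularity assumption to replace the Bouligand cone $K_{ad}(x_\ast)$ by the analytically tractable cone $K_\le(x_\ast)$, and then rewrite both the sign condition on $f'_H$ and the cone $K_\le(x_\ast)$ in terms of the conjugate cones of the members of the two exhausters. First I would invoke Theorem \ref{ch_info_th_exstremum_DH_min}: local minimality yields $f'_H(x_\ast,g)\ge 0$ for every $g\in K_{ad}(x_\ast)$. Since $K_<(x_\ast)\subset K_{ad}(x_\ast)\subset K_\le(x_\ast)$ and, by regularity, $\operatorname{cl}\{K_<(x_\ast)\}=K_\le(x_\ast)$, passing to closures gives $\operatorname{cl}\{K_{ad}(x_\ast)\}=K_\le(x_\ast)$. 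As $f$ is Hadamard differentiable, $f'_H(x_\ast,\cdot)$ is continuous, so $\{g:f'_H(x_\ast,g)\ge 0\}$ is closed and, containing $K_{ad}(x_\ast)$, it also contains its closure. Hence the necessary condition strengthens to $f'_H(x_\ast,g)\ge 0$ for all $g\in K_\le(x_\ast)$.

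Next I would translate the cone $K_\le(x_\ast)$ through the lower exhauster of $u$. Using $u'_H(x_\ast,g)=\max_{C\in E_\ast(u)}\min_{w\in C}\langle w,g\rangle$, the definitions give $K_\le(x_\ast)=\bigcap_{C\in E_\ast(u)}\{g:\min_{w\in C}\langle w,g\rangle\le 0\}$ and $K_<(x_\ast)=\bigcap_{C\in E_\ast(u)}\{g:\min_{w\in C}\langle w,g\rangle<0\}$. Noting that $\mathbb{R}^n\setminus K^+(C)=\{g:\min_{w\in C}\langle w,g\rangle<0\}$, and combining the elementary inclusions $\operatorname{cl}\{\bigcap_C A_C\}\subset\bigcap_C\operatorname{cl}\{A_C\}\subset\bigcap_C\{g:\min_{w\in C}\langle w,g\rangle\le 0\}=K_\le(x_\ast)$ with the regularity identity $\operatorname{cl}\{K_<(x_\ast)\}=K_\le(x_\ast)$, I would squeeze all these sets together to obtain the key identity $K_\le(x_\ast)=\bigcap_{C\in E_\ast(u)}\operatorname{cl}\{\mathbb{R}^n\setminus K^+(C)\}$, i.e. exactly the left-hand side of (\ref{abbasov_th_min_cond_up_f_low_u_condition}).

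Finally I would rewrite the strengthened condition itself. From $f'_H(x_\ast,g)=\min_{C\in E^\ast(f)}\max_{v\in C}\langle v,g\rangle$ the inequality $f'_H(x_\ast,g)\ge 0$ for all $g\in K_\le(x_\ast)$ is equivalent to $K_\le(x_\ast)\subset\bigcap_{C\in E^\ast(f)}\{g:\max_{v\in C}\langle v,g\rangle\ge 0\}$. Writing the support function $p_C(g)=\max_{v\in C}\langle v,g\rangle$, which is convex, positively homogeneous and continuous, I would prove a short lemma that for $C\ne\{0_n\}$ one has $\{g:p_C(g)\ge 0\}=\operatorname{cl}\{g:p_C(g)>0\}$; the argument is that if $p_C$ vanished on a whole neighbourhood then every $v\in C$ would be orthogonal to an open set of directions, forcing $C=\{0_n\}$. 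Since $\{g:p_C(g)>0\}=\mathbb{R}^n\setminus\{g:\langle v,g\rangle\le 0\ \forall v\in C\}=\mathbb{R}^n\setminus(-K^+(C))$, this identifies $\{g:\max_{v\in C}\langle v,g\rangle\ge 0\}$ with $\operatorname{cl}\{\mathbb{R}^n\setminus(-K^+(C))\}$, turning the condition into the inclusion (\ref{abbasov_th_min_cond_up_f_low_u_condition}).

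The step I expect to be the main obstacle is the careful bookkeeping of the closure operations so that the analytic level sets of the directional derivatives coincide with the geometric polar cones: in particular, establishing that the regularity condition makes $\bigcap_{C}\operatorname{cl}\{\mathbb{R}^n\setminus K^+(C)\}$ collapse back onto $K_\le(x_\ast)$ rather than onto a strictly larger set, and proving the density lemma $\operatorname{cl}\{p_C>0\}=\{p_C\ge 0\}$ together with the handling of the degenerate members $C=\{0_n\}$. Once these two closure facts are in place, the remainder is a direct transcription of the InfMax and SupMin representations into statements about the conjugate cones $K^+(C)$.
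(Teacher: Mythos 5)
Your route is essentially the paper's: invoke Theorem \ref{ch_info_th_exstremum_DH_min}, use regularity (plus the sandwich $K_{<}\subset K_{ad}\subset K_{\leq}$ and continuity of $f'_H(x_\ast,\cdot)$) to strengthen the condition to all of $K_{\leq}(x_\ast)$, substitute the InfMax/SupMin representations, and translate the two sign conditions into statements about the cones $K^{+}(C)$. Your closure bookkeeping on the constraint side is actually more careful than the paper's: the squeeze identifying $K_{\leq}(x_\ast)$ with $\bigcap_{C\in E_\ast(u)}\operatorname{cl}\{\mathbb{R}^n\setminus K^{+}(C)\}$ is correct (the paper only uses, and only really justifies, the inclusion of that intersection into $\{g:\min_{v\in C}\langle v,g\rangle\le 0\ \forall C\in E_\ast(u)\}$, which is the direction needed).

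The genuine problem is your density lemma on the objective side. The claim that $C\neq\{0_n\}$ implies $\{g: p_C(g)\ge 0\}=\operatorname{cl}\{g: p_C(g)>0\}$, where $p_C(g)=\max_{v\in C}\langle v,g\rangle$, is false, and the argument offered does not work: if $p_C\le 0$ on a neighbourhood of a point $g_0$ with $p_C(g_0)=0$, this does not force every $v\in C$ to be orthogonal to an open set of directions; it only forces the maximizing element $v_0$ at $g_0$ (which satisfies $\langle v_0,g_0\rangle=0$ and $\langle v_0,\cdot\rangle\le 0$ near $g_0$) to vanish, i.e.\ it forces $0_n\in C$, not $C=\{0_n\}$. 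A concrete counterexample to your lemma is $C=\operatorname{co}\{0_n,-e_1\}$: then $p_C(g)=\max\{0,-g_1\}\ge 0$ everywhere, so $\{p_C\ge 0\}=\mathbb{R}^n$, while $\operatorname{cl}\{p_C>0\}=\{g\mid g_1\le 0\}$. The hypothesis you actually need is $0_n\notin C$ (or, equivalently, that $p_C$ does not vanish on an open set), and the degenerate members of $E^{\ast}(f)$ containing the origin are left unhandled. To be fair, the paper's own proof asserts exactly this identification with the phrase ``and vice versa'' and no justification, so your proposal makes visible a step the paper leaves implicit; but as written the lemma and its proof are incorrect, and this is the step on which the whole translation into condition (\ref{abbasov_th_min_cond_up_f_low_u_condition}) rests.
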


\begin{proof}
Due to Theorem \ref{ch_info_th_exstremum_DH_min} if point $x_\ast$
is a local minimum of the function $f$ on $\Omega$, then
\begin{equation*}\label{abbasov_th_min_cond_up_f_low_u_proof_eq_1}
f^{\prime}_H(x_\ast,g)\geq 0 \quad \forall g\in K_{ad}(x_\ast).
\end{equation*}
Whence considering regularity condition we get
\begin{equation}\label{abbasov_th_min_cond_up_f_low_u_proof_eq_2}
f^{\prime}_H(x_\ast,g)\geq 0 \quad \forall g\in K_{\leq}(x_\ast).
\end{equation}
Using exhauster representation for the directional derivatives in
(\ref{abbasov_th_min_cond_up_f_low_u_proof_eq_2}) we obtain that
inequality $$\min_{C\in E^\ast(f)} \max_{v\in C}\langle v,g\rangle
\geq 0$$ holds for any $g$ such that $$\max_{C\in E_\ast(u)}
\min_{v\in C}\langle v,g\rangle\leq 0.$$ This means that for any
$g\in\mathbb{R}^n$ such that
\begin{equation}\label{abbasov_th_min_cond_up_f_low_u_proof_eq_3}
\min_{v\in C}\langle v,g\rangle\leq 0 \quad \forall C\in
E_\ast(u)
\end{equation}
holds the condition
\begin{equation}\label{abbasov_th_min_cond_up_f_low_u_proof_eq_4}
\max_{v\in C}\langle v,g\rangle
\geq 0 \quad \forall C\in E^\ast(f).
\end{equation}

Inequality (\ref{abbasov_th_min_cond_up_f_low_u_proof_eq_3}) is
equivalent to the fact that for every $C\in E_\ast(u)$ there
exists $v(C)\in C$ such that $\langle v(C),g \rangle\leq 0$.
Consequently $g$ does not lie in the interior of the set
$(\operatorname{cone}\{C\})^+$ for all $C\in E_\ast(u)$. Thus
denoting $K^+(C)$ a conjugate cone of $\operatorname{cone}\{C\}$
we have $g\in\displaystyle\bigcap_{C\in E_{\ast}(u)}
\operatorname{cl}\left\{\mathbb{R}^n\setminus K^+(C)\right\}$.

The same way we can show that any $g$ satisfying inequality
(\ref{abbasov_th_min_cond_up_f_low_u_proof_eq_4}) belongs to the
set $\displaystyle\bigcap_{C\in
E^{\ast}(f)}\operatorname{cl}\left\{\mathbb{R}^n\setminus
\left(-K^+(C)\right)\right\}$ and vice versa.

Therefore (\ref{abbasov_th_min_cond_up_f_low_u_proof_eq_3}) and
(\ref{abbasov_th_min_cond_up_f_low_u_proof_eq_4}) implies
(\ref{abbasov_th_min_cond_up_f_low_u_condition}).
\end{proof}

\begin{corollary} Condition
(\ref{abbasov_th_min_cond_up_f_low_u_condition}) can be
interpreted as follows: for any hyperplane passing through the
origin which nonpositive half-space contains an element from $C\in
E_{\ast}(u)$ for all $C\in E_{\ast}(u)$, there exists an element
from $\widetilde{C}$ which lies in the nonnegative half-space of
this hyperplane for all $\widetilde{C}\in E^{\ast}(f)$.
\end{corollary}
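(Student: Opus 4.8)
The plan is to read the corollary off the equivalences already established inside the proof of Theorem~\ref{abbasov_th_min_cond_up_f_low_u}, translating the two intersections occurring in \eqref{abbasov_th_min_cond_up_f_low_u_condition} into statements about the two half-spaces cut out by a hyperplane through the origin. Concretely, that proof shows that a direction $g$ belongs to the left-hand set of \eqref{abbasov_th_min_cond_up_f_low_u_condition} if and only if \eqref{abbasov_th_min_cond_up_f_low_u_proof_eq_3} holds, and that $g$ belongs to the right-hand set if and only if \eqref{abbasov_th_min_cond_up_f_low_u_proof_eq_4} holds. Hence the inclusion \eqref{abbasov_th_min_cond_up_f_low_u_condition} is nothing but the implication ``\eqref{abbasov_th_min_cond_up_f_low_u_proof_eq_3} $\Rightarrow$ \eqref{abbasov_th_min_cond_up_f_low_u_proof_eq_4} for every $g$''. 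It therefore suffices to phrase the two inequality conditions geometrically.

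First I would fix a nonzero $g$ and attach to it the hyperplane $H_g=\{x\mid\langle g,x\rangle=0\}$ through the origin, with nonpositive half-space $\{x\mid\langle g,x\rangle\le0\}$ and nonnegative half-space $\{x\mid\langle g,x\rangle\ge0\}$. Since every $C$ is convex and compact, the minimum and maximum in \eqref{abbasov_th_min_cond_up_f_low_u_proof_eq_3} and \eqref{abbasov_th_min_cond_up_f_low_u_proof_eq_4} are attained, so \eqref{abbasov_th_min_cond_up_f_low_u_proof_eq_3} reads ``for each $C\in E_\ast(u)$ some $v\in C$ satisfies $\langle g,v\rangle\le0$'', i.e. the nonpositive half-space of $H_g$ meets every $C\in E_\ast(u)$ --- which is exactly the hypothesis of the corollary. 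Likewise \eqref{abbasov_th_min_cond_up_f_low_u_proof_eq_4} reads ``for each $\widetilde C\in E^\ast(f)$ some $w\in\widetilde C$ satisfies $\langle g,w\rangle\ge0$'', i.e. the nonnegative half-space of $H_g$ meets every $\widetilde C\in E^\ast(f)$ --- which is exactly the conclusion of the corollary. Combining these translations with the equivalence recalled above yields precisely the stated interpretation; the degenerate direction $g=0_n$ lies trivially in both sets and carries no hyperplane, so it may be discarded.

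The step that deserves the most attention is not the (immediate) half-space translation but the equivalence between the set-memberships in \eqref{abbasov_th_min_cond_up_f_low_u_condition} and the inequalities \eqref{abbasov_th_min_cond_up_f_low_u_proof_eq_3}, \eqref{abbasov_th_min_cond_up_f_low_u_proof_eq_4}, which I would simply import from the proof of Theorem~\ref{abbasov_th_min_cond_up_f_low_u}. Its core is the identity $\operatorname{cl}\{\mathbb{R}^n\setminus K^+(C)\}=\mathbb{R}^n\setminus\operatorname{int}K^+(C)$ together with the description $\operatorname{int}K^+(C)=\{g\mid\langle v,g\rangle>0\ \forall v\in C\}$ of the interior of the dual cone of a compact set; the closures in \eqref{abbasov_th_min_cond_up_f_low_u_condition} are exactly what converts the strict inequalities defining these interiors into the nonstrict ``$\le0$'' and ``$\ge0$'' appearing in the half-space description. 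The only delicate point is that this interior description, and hence the equivalence, requires $C$ to be suitably nondegenerate (for instance $0_n\notin C$, so that $\min_{v\in C}\langle v,g\rangle=0$ cannot coexist with $g\in\operatorname{int}K^+(C)$); once that is granted the corollary is a direct rephrasing.
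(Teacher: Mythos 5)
Your proposal is correct and follows essentially the same route the paper intends: the corollary is a direct geometric rephrasing of the equivalences between membership in the two sets of (\ref{abbasov_th_min_cond_up_f_low_u_condition}) and the inequalities (\ref{abbasov_th_min_cond_up_f_low_u_proof_eq_3}), (\ref{abbasov_th_min_cond_up_f_low_u_proof_eq_4}) established in the proof of Theorem~\ref{abbasov_th_min_cond_up_f_low_u}, with the normal $g$ of the hyperplane playing the role of the direction. Your added remark about the closure identity and the nondegeneracy of the sets $C$ is a point the paper itself glosses over, and it does not change the conclusion.
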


\begin{theorem}\label{abbasov_th_min_cond_up_f_up_u}
Let the regularity condition holds at the point
$x_{\ast}\in\Omega$, families of sets $E^{\ast}(f)$ and
$E^{\ast}(u)$ be upper exhausters in the sense of Hadamard of the
functions $f$ and $u$ at the point $x_\ast$ respectively. Then for
the point $x_\ast$ to be a local minimum of the function $f$ on
the set $\Omega$ it is necessary that
\begin{equation}\label{abbasov_th_min_cond_up_f_up_u_condition}
\bigcup_{C\in E^{\ast}(u)} \left[-K^+({C})\right]\subset
\bigcap_{C\in
E^{\ast}(f)}\operatorname{cl}\left\{\mathbb{R}^n\setminus
\left(-K^+(C)\right)\right\},
\end{equation}
where $K(C)=\operatorname{cone}\{C\}$ is the conic hull of the set
$C$, $K^+(C)$ is the conjugate cone of $K(C)$.
\end{theorem}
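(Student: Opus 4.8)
The plan is to follow the same logical skeleton as the proof of Theorem 4, but to replace the lower exhauster $E_\ast(u)$ by an upper exhauster $E^\ast(u)$ and track how this changes the characterization of the feasible cone $K_{\leq}(x_\ast)$. As before, I would start from the necessary condition of Theorem 5, which under the regularity condition yields
\begin{equation*}
f^{\prime}_H(x_\ast,g)\geq 0 \quad \forall g\in K_{\leq}(x_\ast).
\end{equation*}
The description of the right-hand side (the conditions coming from $f$) is identical to the previous theorem: using the upper exhauster representation $f^{\prime}_H(x_\ast,g)=\min_{C\in E^\ast(f)}\max_{v\in C}\langle v,g\rangle$, the inequality $f^{\prime}_H(x_\ast,g)\geq 0$ is equivalent to $\max_{v\in C}\langle v,g\rangle\geq 0$ for all $C\in E^\ast(f)$, which I have already shown characterizes the set $\bigcap_{C\in E^{\ast}(f)}\operatorname{cl}\{\mathbb{R}^n\setminus(-K^+(C))\}$ on the right of (\ref{abbasov_th_min_cond_up_f_up_u_condition}). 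So the only genuinely new work is on the left-hand side: describing $K_{\leq}(x_\ast)$ via the \emph{upper} exhauster of $u$.

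The key step is therefore to express the feasibility condition $u^{\prime}_H(x_\ast,g)\leq 0$ through the representation $u^{\prime}_H(x_\ast,g)=\min_{C\in E^\ast(u)}\max_{v\in C}\langle v,g\rangle$. Here the logic differs from the lower-exhauster case. The inequality $\min_{C\in E^\ast(u)}\max_{v\in C}\langle v,g\rangle\leq 0$ holds if and only if \emph{there exists} at least one set $C\in E^\ast(u)$ with $\max_{v\in C}\langle v,g\rangle\leq 0$, i.e. $\langle v,g\rangle\leq 0$ for all $v\in C$. The condition $\langle v,g\rangle\le 0\ \forall v\in C$ says precisely that $g$ lies in the polar $-K^+(C)$ of $\operatorname{cone}\{C\}$ (the set of directions making a nonpositive inner product with every element of $C$). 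Since this must hold for \emph{some} $C$, the set of feasible directions is the union $\bigcup_{C\in E^\ast(u)}[-K^+(C)]$, which is exactly the left-hand side of (\ref{abbasov_th_min_cond_up_f_up_u_condition}). I would state this as the replacement of the intersection-of-complements description by a union, which is the structural signature of switching from a lower to an upper exhauster.

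Combining the two sides, I would argue that the necessary condition $f^{\prime}_H(x_\ast,g)\geq 0$ for every feasible $g$ means exactly that every $g$ in the left-hand set lies in the right-hand set, which is the claimed inclusion. The main obstacle I anticipate is the careful handling of closures and strict versus nonstrict inequalities. In particular, when passing from $\langle v,g\rangle\le 0\ \forall v\in C$ to membership in $-K^+(C)$, one should note that $-K^+(C)$ is already closed, so no closure is needed on the left; this asymmetry with the right-hand side (where closures of complements appear) is exactly what distinguishes the "exists" quantifier (giving a union of closed polars) from the "for all" quantifier (giving an intersection whose natural description involves closures of complements of interiors). I would verify that the boundary directions, where $\max_{v\in C}\langle v,g\rangle = 0$ is attained but $g$ is not in the interior of $K^+(C)$, are correctly accounted for, leaning on the continuity of $u^{\prime}_H(x_\ast,\cdot)$ and on the relation $K_{<}(x_\ast)\subset K_{ad}(x_\ast)\subset K_{\leq}(x_\ast)$ together with the regularity assumption to justify that working with $K_{\leq}$ loses nothing.
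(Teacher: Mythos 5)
Your proposal is correct and follows essentially the same route as the paper's proof: reduce to $f^{\prime}_H(x_\ast,g)\geq 0$ on $K_{\leq}(x_\ast)$ via Theorem \ref{ch_info_th_exstremum_DH_min} and regularity, reuse the characterization of $\{g\colon \max_{v\in C}\langle v,g\rangle\geq 0\ \forall C\in E^\ast(f)\}$ from the preceding theorem, and observe that $\min_{C\in E^\ast(u)}\max_{v\in C}\langle v,g\rangle\leq 0$ holds iff $g\in -K^+(\widetilde C)$ for some $\widetilde C\in E^\ast(u)$, giving the union on the left. Your added remarks on closedness of $-K^+(C)$ and the exists/for-all asymmetry are consistent with, and slightly more explicit than, the paper's argument.
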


\begin{proof}
As in the proof of the previous theorem it can be shown that if
$x_\ast$ is a local minimum of the function $f$ on the set
$\Omega$, then
$$\min_{C\in E^\ast(f)} \max_{v\in C}\langle v,g\rangle \geq 0$$
holds for any $g$ such that $$\min_{C\in E^\ast(u)} \max_{v\in
C}\langle v,g\rangle\leq 0.$$ Therefore for any $g\in\mathbb{R}^n$
such that
\begin{equation}\label{abbasov_th_min_cond_up_f_up_u_proof_eq_1}
\exists \widetilde{C}\in E^\ast(u)\colon \max_{v\in
\widetilde{C}}\langle v,g\rangle\leq 0
\end{equation}
holds the condition
\begin{equation}\label{abbasov_th_min_cond_up_f_up_u_proof_eq_2}
\max_{v\in C}\langle v,g\rangle \geq 0 \quad \forall C\in
E^\ast(f).
\end{equation}
Inequality (\ref{abbasov_th_min_cond_up_f_up_u_proof_eq_1}) is
equivalent to the fact that there exists $\widetilde{C}\in
E^\ast(u)$ such that $\langle v,g \rangle\leq 0$ for all $v\in
\widetilde{C}$. Consequently there exists $\widetilde{C}\in
E^\ast(u)$ such that $g\in -K^+(\widetilde{C})$, where
$K^+(\widetilde{C})$ is a conjugate cone of
$\operatorname{cone}\{\widetilde{C}\}$. 
Whence we conclude that $g\in\displaystyle\bigcup_{C\in
E^\ast(u)}[-K^+(C)]$ is equivalent to condition
(\ref{abbasov_th_min_cond_up_f_up_u_proof_eq_1}). Therefore
(\ref{abbasov_th_min_cond_up_f_up_u_proof_eq_1}) and
(\ref{abbasov_th_min_cond_up_f_up_u_proof_eq_2}) implies
(\ref{abbasov_th_min_cond_up_f_up_u_condition}).
\end{proof}

\begin{corollary} Condition
(\ref{abbasov_th_min_cond_up_f_up_u_condition}) can be interpreted
as follows: for any hyperplane passing through the origin which
nonpositive half-space contains at least one set $C\in
E^{\ast}(u)$, there exists an element from $\widetilde{C}$ which
lies in the nonnegative half-space of this hyperplane for all
$\widetilde{C}\in E^{\ast}(f)$.
\end{corollary}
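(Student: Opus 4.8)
The plan is to read off the geometric meaning of each side of the inclusion (\ref{abbasov_th_min_cond_up_f_up_u_condition}) by unfolding the definition of the conjugate cone, reusing the two membership characterizations already established in the proof of Theorem \ref{abbasov_th_min_cond_up_f_up_u}. Throughout I would identify a hyperplane through the origin with a choice of its normal vector $g\in\mathbb{R}^n\setminus\{0\}$, so that its nonpositive half-space is $\{x\mid\langle g,x\rangle\le 0\}$ and its nonnegative half-space is $\{x\mid\langle g,x\rangle\ge 0\}$.

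First I would translate the left-hand side. By definition of the conjugate cone, $g\in -K^+(C)$ holds iff $\langle v,g\rangle\le 0$ for every $v\in C$, i.e. iff the whole set $C$ lies in the nonpositive half-space of the hyperplane with normal $g$. Taking the union over $C\in E^\ast(u)$, membership $g\in\bigcup_{C\in E^\ast(u)}[-K^+(C)]$ is therefore equivalent to the existence of at least one set $C\in E^\ast(u)$ contained in that nonpositive half-space, which is exactly condition (\ref{abbasov_th_min_cond_up_f_up_u_proof_eq_1}).

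Next I would translate the right-hand side, which is the only non-routine step. Using the topological identity $\operatorname{cl}(\mathbb{R}^n\setminus S)=\mathbb{R}^n\setminus\operatorname{int}(S)$ with $S=-K^+(C)=\{g\mid\max_{v\in C}\langle v,g\rangle\le 0\}$, I would compute the interior of this closed convex cone. Since $\max_{v\in C}\langle v,\cdot\rangle$ is continuous, convex and positively homogeneous, one has $\operatorname{int}(-K^+(C))=\{g\mid\max_{v\in C}\langle v,g\rangle<0\}$; here the mild non-degeneracy $0\notin C$ (equivalently $\{g\mid\max_{v\in C}\langle v,g\rangle<0\}\neq\emptyset$) enters, guaranteeing Slater-type regularity of the constraint. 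Consequently
$$\operatorname{cl}\{\mathbb{R}^n\setminus(-K^+(C))\}=\{g\mid\max_{v\in C}\langle v,g\rangle\ge 0\},$$
so $g$ belongs to this set iff some $v\in C$ satisfies $\langle v,g\rangle\ge 0$, i.e. iff $C$ has an element in the nonnegative half-space. Intersecting over $C\in E^\ast(f)$ shows that $g\in\bigcap_{C\in E^\ast(f)}\operatorname{cl}\{\mathbb{R}^n\setminus(-K^+(C))\}$ is equivalent to the requirement that every $C\in E^\ast(f)$ have an element in the nonnegative half-space, which is exactly condition (\ref{abbasov_th_min_cond_up_f_up_u_proof_eq_2}).

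Finally I would assemble the two readings. The inclusion (\ref{abbasov_th_min_cond_up_f_up_u_condition}) asserts that every $g$ in the left-hand set also lies in the right-hand set; substituting the two geometric descriptions yields precisely the claim: for any hyperplane through the origin whose nonpositive half-space contains at least one set $C\in E^\ast(u)$, every $\widetilde{C}\in E^\ast(f)$ has an element in the nonnegative half-space. I expect the main obstacle to be the boundary case in the closure computation, namely points $g$ with $\max_{v\in C}\langle v,g\rangle=0$: these must be shown to be limits of points with strictly positive value, which I would handle by perturbing $g\mapsto g+\varepsilon v^\ast$ along a maximizer $v^\ast\in C$, using $0\notin C$ to ensure $v^\ast\neq 0$ and hence $\langle v^\ast,g+\varepsilon v^\ast\rangle>0$, which rules out the degenerate situation in which the above characterization of the closure would fail.
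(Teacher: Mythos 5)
Your proposal is correct and follows essentially the same route as the paper: the paper states this corollary without a separate proof, and the two set--membership translations you use (the left-hand side via $g\in -K^{+}(C)\Leftrightarrow C$ lies in the nonpositive half-space, the right-hand side via $\operatorname{cl}\{\mathbb{R}^n\setminus(-K^{+}(C))\}=\{g\mid\max_{v\in C}\langle v,g\rangle\geq 0\}$) are exactly the identifications already made in the proofs of Theorems \ref{abbasov_th_min_cond_up_f_low_u} and \ref{abbasov_th_min_cond_up_f_up_u}. Your explicit treatment of the closure computation and of the degenerate case $0\in C$ (where $\operatorname{int}(-K^{+}(C))$ need not equal $\{g\mid\max_{v\in C}\langle v,g\rangle<0\}$) is a point of care that the paper itself glosses over.
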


\begin{remark}If $x_\ast$ is an unconstrained local minimum of $f$ then condition
(\ref{exhausters_th_unconstraint_min_proper_exh_min_0}) holds.
Therefore $$\bigcap_{C\in
E^{\ast}(f)}\operatorname{cl}\left\{\mathbb{R}^n\setminus
\left(-K^+(C)\right)\right\}=\mathbb{R}^n,$$ which implies that
inclusions (\ref{abbasov_th_min_cond_up_f_low_u_condition}) and
(\ref{abbasov_th_min_cond_up_f_up_u_condition}) are always
satisfied. \end{remark}

Now proceed to the minimum conditions in terms of (adjoint) lower
exhauster of the function $f$.

\begin{theorem}\label{abbasov_th_min_cond_low_f_low_u}
Let the regularity condition holds at the point
$x_{\ast}\in\Omega$, families of sets $E_{\ast}(f)$ and
$E_{\ast}(u)$ be lower exhausters in the sense of Hadamard of the
functions $f$ and $u$ at the point $x_\ast$ respectively. Then for
the point $x_\ast$ to be a local minimum of the function $f$ on
the set $\Omega$ it is necessary that
\begin{equation}\label{abbasov_th_min_cond_low_f_low_u_condition}
\bigcap_{C\in
E_{\ast}(u)}\operatorname{cl}\left\{\mathbb{R}^n\setminus
K^+({C})\right\}\subset \bigcup_{C\in E_{\ast}(f)} K^+({C}),
\end{equation}
where $K(C)=\operatorname{cone}\{C\}$ is the conic hull of the set
$C$, $K^+(C)$ is the conjugate cone of $K(C)$.
\end{theorem}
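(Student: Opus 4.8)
The plan is to follow the same template used in the proofs of Theorems~\ref{abbasov_th_min_cond_up_f_low_u} and \ref{abbasov_th_min_cond_up_f_up_u}, but now translating both the feasibility constraint and the objective condition through \emph{lower} exhauster representations. First I would invoke Theorem~\ref{ch_info_th_exstremum_DH_min}: a local minimizer $x_\ast$ on $\Omega$ satisfies $f^{\prime}_H(x_\ast,g)\geq 0$ for all $g\in K_{ad}(x_\ast)$, and the regularity condition upgrades this to $f^{\prime}_H(x_\ast,g)\geq 0$ for all $g\in K_{\leq}(x_\ast)$, i.e.\ for all $g$ with $u^{\prime}_H(x_\ast,g)\leq 0$. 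The new ingredient is that we represent \emph{both} derivatives by their lower exhausters via (\ref{eqs3a2}): the constraint becomes $\max_{C\in E_\ast(u)}\min_{v\in C}\langle v,g\rangle\leq 0$ and the conclusion becomes $\max_{C\in E_\ast(f)}\min_{v\in C}\langle v,g\rangle\geq 0$.

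Next I would unfold these SupMin inequalities into pointwise statements about the sets $C$. The constraint $\max_{C\in E_\ast(u)}\min_{v\in C}\langle v,g\rangle\leq 0$ is equivalent to $\min_{v\in C}\langle v,g\rangle\leq 0$ for \emph{every} $C\in E_\ast(u)$, which is exactly condition (\ref{abbasov_th_min_cond_up_f_low_u_proof_eq_3}) from the earlier proof; reusing that argument verbatim, this says $g\in\bigcap_{C\in E_\ast(u)}\operatorname{cl}\{\mathbb{R}^n\setminus K^+(C)\}$, giving the left-hand side of (\ref{abbasov_th_min_cond_low_f_low_u_condition}). For the objective, the conclusion $\max_{C\in E_\ast(f)}\min_{v\in C}\langle v,g\rangle\geq 0$ means there \emph{exists} $\widetilde{C}\in E_\ast(f)$ with $\min_{v\in\widetilde{C}}\langle v,g\rangle\geq 0$, i.e.\ $\langle v,g\rangle\geq 0$ for all $v\in\widetilde{C}$. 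By definition of the conjugate cone this is precisely $g\in K^+(\widetilde{C})$, hence $g\in\bigcup_{C\in E_\ast(f)}K^+(C)$, the right-hand side. Chaining these equivalences, the necessary condition (\ref{abbasov_th_min_cond_low_f_low_u_condition}) follows.

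The point I would watch most carefully is the asymmetry between the two sides, which is the structural reason this theorem differs from the previous two. Because $f$ is now handled through its lower (adjoint) exhauster, the objective inequality is a \emph{Sup}Min $\geq 0$ condition, so it produces an \emph{existential} quantifier over $C\in E_\ast(f)$ and hence a \emph{union} of conjugate cones $\bigcup K^+(C)$ rather than an intersection; meanwhile the constraint on $u$, still a universally-quantified $\min_{v\in C}\langle v,g\rangle\leq 0$ over all $C\in E_\ast(u)$, keeps its form as an intersection. I would make sure the quantifier directions are not swapped and that the identity $\{g:\langle v,g\rangle\geq 0\ \forall v\in C\}=K^+(\operatorname{cone}\{C\})$ is stated cleanly, since that is where the union over $E_\ast(f)$ is born. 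No closure operation is needed on the right-hand side here (unlike in Theorem~\ref{abbasov_th_min_cond_up_f_low_u}) precisely because the nonstrict inequality $\langle v,g\rangle\geq 0$ already defines the \emph{closed} cone $K^+(\widetilde{C})$ exactly, and the existential quantifier over a family does not force us to pass to a closure. This absence of a closure is the one step I would double-check against the analogous reasoning in the earlier proofs, as it is the main place an error could hide.
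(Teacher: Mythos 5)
Your proposal is correct and follows essentially the same route as the paper's own proof: reduce to $f'_H(x_\ast,g)\geq 0$ on $K_{\leq}(x_\ast)$ via Theorem~\ref{ch_info_th_exstremum_DH_min} and regularity, express both derivatives through lower exhausters, and translate the universally quantified constraint into the intersection of closures and the existentially quantified conclusion into the union $\bigcup_{C\in E_\ast(f)}K^+(C)$. Your remarks on the quantifier asymmetry and the absence of a closure on the right-hand side accurately reflect why the paper's stated condition has that form.
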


\begin{proof}
As in the proof of Theorem \ref{abbasov_th_min_cond_up_f_low_u} it
can be shown that if $x_\ast$ is a local minimum of the function
$f$ on the set $\Omega$, then
$$\max_{C\in E_\ast(f)} \min_{v\in C}\langle v,g\rangle \geq 0$$
holds for any $g$ such that $$\max_{C\in E_\ast(u)} \min_{v\in
C}\langle v,g\rangle\leq 0.$$ Therefore for any $g\in\mathbb{R}^n$
such that
\begin{equation}\label{abbasov_th_min_cond_low_f_low_u_proof_eq_1}
\min_{v\in C}\langle v,g\rangle\leq 0 \quad \forall C\in E_\ast(u)
\end{equation}
holds the condition
\begin{equation}\label{abbasov_th_min_cond_low_f_low_u_proof_eq_2}
\exists \widehat{C}\in E_{\ast}(f)\colon
\min_{v\in\widehat{C}}\langle v,g\rangle \geq 0.
\end{equation}
Inequality (\ref{abbasov_th_min_cond_low_f_low_u_proof_eq_2}) is
equivalent to the fact that $g\in K^+(\widehat{C})$, where
$K^+(\widehat{C})$ is a conjugate cone of
$\operatorname{cone}(\widehat{C})$. Thus
(\ref{abbasov_th_min_cond_low_f_low_u_proof_eq_1}) and
(\ref{abbasov_th_min_cond_low_f_low_u_proof_eq_2}) implies
(\ref{abbasov_th_min_cond_low_f_low_u_condition}).
\end{proof}

\begin{corollary} Condition
(\ref{abbasov_th_min_cond_low_f_low_u_condition}) can be
interpreted as follows: for any hyperplane passing through the
origin which nonpositive half-space contains an element from $C$
for all $C\in E_{\ast}(u)$, there exists at least one set $C\in
E_{\ast}(f)$ which fully lies in the nonnegative half-space of
this hyperplane.
\end{corollary}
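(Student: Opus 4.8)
The plan is to prove the corollary as a direct translation of the set-theoretic inclusion (\ref{abbasov_th_min_cond_low_f_low_u_condition}) into the language of hyperplanes, by first building a dictionary that converts each side of the inclusion into a geometric statement about a single vector $g$, and then observing that the inclusion is nothing but the universally quantified implication asserted in the corollary.

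First I would fix a nonzero $g\in\mathbb{R}^n$ and associate to it the hyperplane through the origin $H_g=\{x\in\mathbb{R}^n\mid\langle x,g\rangle=0\}$, with nonpositive half-space $\{x\mid\langle x,g\rangle\leq0\}$ and nonnegative half-space $\{x\mid\langle x,g\rangle\geq0\}$. The two elementary equivalences I would record are: a compact set $C$ has a point in the nonpositive half-space of $H_g$ if and only if $\min_{v\in C}\langle v,g\rangle\leq0$; and $C$ lies entirely in the nonnegative half-space of $H_g$ if and only if $\min_{v\in C}\langle v,g\rangle\geq0$, which by definition of the conjugate cone is exactly $g\in K^+(C)$.

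Next I would identify each side of (\ref{abbasov_th_min_cond_low_f_low_u_condition}) with one of these geometric properties. For the right-hand side this is immediate from the second equivalence: $g\in\bigcup_{C\in E_\ast(f)}K^+(C)$ holds precisely when there exists $\widehat{C}\in E_\ast(f)$ with $\widehat{C}$ contained in the nonnegative half-space of $H_g$. For the left-hand side I would reuse the characterization already established in the proof of Theorem \ref{abbasov_th_min_cond_up_f_low_u}, namely that $g\in\operatorname{cl}\{\mathbb{R}^n\setminus K^+(C)\}$ is equivalent to $\min_{v\in C}\langle v,g\rangle\leq0$; intersecting over $C\in E_\ast(u)$ and applying the first equivalence shows that $g$ belongs to the left-hand set exactly when every $C\in E_\ast(u)$ meets the nonpositive half-space of $H_g$.

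With both sides translated, the inclusion (\ref{abbasov_th_min_cond_low_f_low_u_condition}) reads: every $g$ for which all $C\in E_\ast(u)$ meet the nonpositive half-space of $H_g$ is a $g$ for which some $\widehat{C}\in E_\ast(f)$ lies fully in the nonnegative half-space, which is verbatim the statement of the corollary once normal vectors $g$ are read as hyperplanes $H_g$. The step I expect to require the most care is the closure/interior identity underlying the left-hand equivalence: one must use $\operatorname{cl}\{\mathbb{R}^n\setminus K^+(C)\}=\mathbb{R}^n\setminus\operatorname{int}K^+(C)$ together with $\operatorname{int}K^+(C)=\{g\mid\min_{v\in C}\langle v,g\rangle>0\}$, which relies on the compactness of the exhauster sets and correctly places the boundary case $\min_{v\in C}\langle v,g\rangle=0$ on the "meets the nonpositive half-space" side. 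The degenerate vector $g=0$ lies trivially in both sets and corresponds to no proper hyperplane, so it may be set aside without affecting the equivalence.
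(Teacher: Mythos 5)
Your proposal is correct and takes essentially the same route as the paper: the proof of Theorem \ref{abbasov_th_min_cond_low_f_low_u} (combined with the left-hand-side characterization already worked out in the proof of Theorem \ref{abbasov_th_min_cond_up_f_low_u}) establishes exactly your dictionary --- $\min_{v\in C}\langle v,g\rangle\leq 0$ meaning $C$ meets the nonpositive half-space of $H_g$, and $g\in K^+(\widehat{C})$ meaning $\widehat{C}$ lies fully in the nonnegative half-space --- so the corollary is just the verbatim hyperplane reading of the inclusion (\ref{abbasov_th_min_cond_low_f_low_u_condition}). If anything, your explicit treatment of the identity $\operatorname{cl}\left\{\mathbb{R}^n\setminus K^+(C)\right\}=\mathbb{R}^n\setminus\operatorname{int}K^+(C)$ and of the boundary case $\min_{v\in C}\langle v,g\rangle=0$ is slightly more careful than the paper's own wording, which passes over the degenerate possibility $0\in C$ in exactly the same way you do.
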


Similarly can be proved the following result.

\begin{theorem}\label{abbasov_th_min_cond_low_f_up_u}
Let the regularity condition holds at the point
$x_{\ast}\in\Omega$, families of sets $E_{\ast}(f)$ and
$E^{\ast}(u)$ be a lower and an upper exhausters in the sense of
Hadamard of the functions $f$ and $u$ at the point $x_\ast$
respectively. Then for the point $x_\ast$ to be a local minimum of
the function $f$ on the set $\Omega$ it is necessary that
\begin{equation}\label{abbasov_th_min_cond_low_f_up_u_condition}
\bigcup_{C\in E^{\ast}(u)} \left[-K^+({C})\right]\subset
\bigcup_{C\in E_{\ast}(f)} K^+({C}),
\end{equation}
where $K(C)=\operatorname{cone}\{C\}$ is the conic hull of the set
$C$, $K^+(C)$ is the conjugate cone of $K(C)$.
\end{theorem}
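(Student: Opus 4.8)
The plan is to reproduce the argument used for the two preceding theorems, combining the transcription of the upper exhauster of $u$ from Theorem~\ref{abbasov_th_min_cond_up_f_up_u} with the transcription of the lower exhauster of $f$ from Theorem~\ref{abbasov_th_min_cond_low_f_low_u}. First I would invoke Theorem~\ref{ch_info_th_exstremum_DH_min}: if $x_\ast$ is a local minimizer of $f$ on $\Omega$, then $f^{\prime}_H(x_\ast,g)\geq 0$ for all $g\in K_{ad}(x_\ast)$. Since $K_<(x_\ast)\subset K_{ad}(x_\ast)$ and $f^{\prime}_H(x_\ast,\cdot)$ is continuous in the direction, the set where $f^{\prime}_H(x_\ast,\cdot)\geq 0$ is closed and contains $K_<(x_\ast)$, hence contains $\operatorname{cl}\{K_<(x_\ast)\}$, which equals $K_{\leq}(x_\ast)$ by the regularity condition. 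Thus $f^{\prime}_H(x_\ast,g)\geq 0$ for all $g\in K_{\leq}(x_\ast)$.

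Next I would insert the exhauster representations, using the max-min form through the lower exhauster $E_\ast(f)$ for $f$ and the min-max form through the upper exhauster $E^\ast(u)$ for $u$. Since $K_{\leq}(x_\ast)=\{g\mid u^{\prime}_H(x_\ast,g)\leq 0\}$, the displayed condition becomes the implication: for every $g$ with $\min_{C\in E^\ast(u)}\max_{v\in C}\langle v,g\rangle\leq 0$ one has $\max_{C\in E_\ast(f)}\min_{v\in C}\langle v,g\rangle\geq 0$. It then remains to rewrite each of these two inequalities as a cone membership, exactly as in the earlier proofs. The premise $\min_{C\in E^\ast(u)}\max_{v\in C}\langle v,g\rangle\leq 0$ holds iff there exists $\widetilde C\in E^\ast(u)$ with $\langle v,g\rangle\leq 0$ for all $v\in\widetilde C$, i.e. iff $g\in -K^+(\widetilde C)$ for some such $\widetilde C$; hence the premise is equivalent to $g\in\bigcup_{C\in E^\ast(u)}[-K^+(C)]$. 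Dually, the conclusion $\max_{C\in E_\ast(f)}\min_{v\in C}\langle v,g\rangle\geq 0$ holds iff there exists $\widehat C\in E_\ast(f)$ with $\langle v,g\rangle\geq 0$ for all $v\in\widehat C$, i.e. iff $g\in K^+(\widehat C)$ for some such $\widehat C$; hence the conclusion is equivalent to $g\in\bigcup_{C\in E_\ast(f)}K^+(C)$.

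Putting the two transcriptions together, the assertion ``the premise implies the conclusion for every $g\in\mathbb{R}^n$'' is precisely the inclusion $\bigcup_{C\in E^\ast(u)}[-K^+(C)]\subset\bigcup_{C\in E_\ast(f)}K^+(C)$, which is~(\ref{abbasov_th_min_cond_low_f_up_u_condition}). I do not anticipate a genuine obstacle here, since both halves have already been carried out in Theorems~\ref{abbasov_th_min_cond_up_f_up_u} and~\ref{abbasov_th_min_cond_low_f_low_u}; the only delicate point is the sign and quantifier bookkeeping, namely keeping $K^+(C)$ distinct from $-K^+(C)$ and noting that \emph{both} sides become unions of cones. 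The union on the left comes from the $\min$ over $E^\ast(u)$ in the min-max (only one set must realize the inequality), and the union on the right comes from the $\max$ over $E_\ast(f)$ in the max-min (again only one set is needed); these are the ``adjoint'' features, in contrast with the intersections that appeared in the ``proper'' transcription of Theorem~\ref{abbasov_th_min_cond_up_f_low_u}.
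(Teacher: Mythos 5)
Your proposal is correct and follows exactly the route the paper intends: the paper states this theorem "can be proved similarly" and your argument is precisely the combination of the premise transcription from Theorem~\ref{abbasov_th_min_cond_up_f_up_u} (upper exhauster of $u$ gives the union $\bigcup_{C\in E^{\ast}(u)}[-K^+(C)]$) with the conclusion transcription from Theorem~\ref{abbasov_th_min_cond_low_f_low_u} (lower exhauster of $f$ gives the union $\bigcup_{C\in E_{\ast}(f)}K^+(C)$), after passing from $K_{ad}(x_\ast)$ to $K_{\leq}(x_\ast)$ via the regularity condition. Your explicit justification of that passage (continuity of $f^{\prime}_H(x_\ast,\cdot)$ plus $K_{<}\subset K_{ad}$ and $\operatorname{cl}\{K_{<}\}=K_{\leq}$) correctly fills in a step the paper leaves implicit.
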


\begin{corollary} Condition
(\ref{abbasov_th_min_cond_low_f_up_u_condition}) can be
interpreted as follows: for any hyperplane passing through the
origin which nonpositive half-space contains at least one set
$C\in E^{\ast}(u)$, there exists at least one set $C\in
E_{\ast}(f)$ which fully lies in the nonnegative half-space of
this hyperplane.
\end{corollary}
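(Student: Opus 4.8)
The plan is to unwind the set-theoretic inclusion (\ref{abbasov_th_min_cond_low_f_up_u_condition}) into the language of hyperplanes and half-spaces using only the definition of the conjugate (dual) cone. Recall that for a set $C$ one has $K^+(C)=\{g\in\mathbb{R}^n\mid\langle v,g\rangle\geq 0\ \forall v\in C\}$, since a linear functional is nonnegative on $\operatorname{cone}\{C\}$ precisely when it is nonnegative on $C$. Consequently $-K^+(C)=\{g\in\mathbb{R}^n\mid\langle v,g\rangle\leq 0\ \forall v\in C\}$.

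First I would fix a nonzero $g\in\mathbb{R}^n$ and identify it with the hyperplane $H_g=\{x\mid\langle x,g\rangle=0\}$ through the origin, whose nonnegative and nonpositive closed half-spaces are $\{x\mid\langle x,g\rangle\geq 0\}$ and $\{x\mid\langle x,g\rangle\leq 0\}$. With this identification the equivalence above reads: $g\in K^+(C)$ if and only if $C$ lies entirely in the nonnegative half-space of $H_g$, and $g\in -K^+(C)$ if and only if $C$ lies entirely in the nonpositive half-space of $H_g$.

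Next I would rewrite each side of (\ref{abbasov_th_min_cond_low_f_up_u_condition}) accordingly. The left-hand side $\bigcup_{C\in E^{\ast}(u)}[-K^+(C)]$ consists exactly of those $g$ for which \emph{at least one} set $C\in E^{\ast}(u)$ lies in the nonpositive half-space of $H_g$; the right-hand side $\bigcup_{C\in E_{\ast}(f)}K^+(C)$ consists exactly of those $g$ for which \emph{at least one} set $C\in E_{\ast}(f)$ lies in the nonnegative half-space of $H_g$. The inclusion therefore states precisely that whenever the nonpositive half-space of a hyperplane through the origin contains some set from $E^{\ast}(u)$, the nonnegative half-space of that same hyperplane contains some set from $E_{\ast}(f)$, which is exactly the assertion of the corollary.

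The only point requiring care is the degenerate value $g=0$, which corresponds to no hyperplane; here I would simply note that $\langle v,0\rangle=0$ for every $v$, so $0$ lies in both unions trivially and its exclusion does not affect the equivalence, while the hyperplane reading applies verbatim to every nonzero $g$. No genuine obstacle arises: the statement is a faithful geometric transcription of (\ref{abbasov_th_min_cond_low_f_up_u_condition}), obtained by applying the single chain of equivalences ``$g$ lies in the (negated) dual cone $\Leftrightarrow$ the corresponding half-space contains $C$'' to each of the two unions.
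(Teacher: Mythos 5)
Your proposal is correct and matches the paper's (implicit) reasoning: the paper states this corollary without a separate proof, as a direct geometric transcription of (\ref{abbasov_th_min_cond_low_f_up_u_condition}) using exactly the equivalences you invoke --- $g\in -K^+(C)$ iff $C$ lies in the nonpositive half-space of the hyperplane with normal $g$, and $g\in K^+(C)$ iff $C$ lies in the nonnegative one --- which are the same identifications used in the proofs of Theorems \ref{abbasov_th_min_cond_up_f_up_u} and \ref{abbasov_th_min_cond_low_f_low_u}. Your explicit handling of the degenerate case $g=0_n$ is a minor point the paper leaves tacit, but it introduces no divergence in approach.
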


\begin{remark}If $x_\ast$ is an unconstrained local minimum of $f$ then condition
(\ref{exhausters_th_unconstraint_min_adjoint_exh_min_0}) holds.
Therefore $$\bigcup_{C\in E_{\ast}(f)} K^+({C})=\mathbb{R}^n,$$
which implies that inclusions
(\ref{abbasov_th_min_cond_low_f_low_u_condition}) and
(\ref{abbasov_th_min_cond_low_f_up_u_condition}) are always
satisfied. \end{remark}

Analogously via Theorem \ref{ch_info_th_exstremum_DH_max} we can
state and prove conditions for the maximum.

\begin{theorem}\label{abbasov_th_max_cond_up_f_low_u}
Let the regularity condition holds at the point
$x^{\ast}\in\Omega$, families of sets $E_{\ast}(f)$ and
$E_{\ast}(u)$ be lower exhausters in the sense of Hadamard of the
functions $f$ and $u$ at the point $x^\ast$ respectively. Then for
the point $x^\ast$ to be a local maximum of the function $f$ on
the set $\Omega$ it is necessary that
\begin{equation}\label{abbasov_th_max_cond_up_f_low_u_condition}
\bigcap_{C\in
E_{\ast}(u)}\operatorname{cl}\left\{\mathbb{R}^n\setminus
K^+(C)\right\}\subset \bigcap_{C\in
E_{\ast}(f)}\operatorname{cl}\left\{\mathbb{R}^n\setminus
K^+(C)\right\},
\end{equation}
where $K(C)=\operatorname{cone}\{C\}$ is the conic hull of the set
$C$, $K^+(C)$ is the conjugate cone of $K(C)$.
\end{theorem}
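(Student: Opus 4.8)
The plan is to mirror the argument of Theorem \ref{abbasov_th_min_cond_up_f_low_u}, but invoking the maximum version of the necessary condition instead of the minimum one. First I would apply Theorem \ref{ch_info_th_exstremum_DH_max}: if $x^\ast$ is a local maximizer of $f$ on $\Omega$, then $f^{\prime}_H(x^\ast,g)\leq 0$ for all $g\in K_{ad}(x^\ast)$. Invoking the regularity condition to replace the Bouligand cone by $K_{\leq}(x^\ast)$, this becomes $f^{\prime}_H(x^\ast,g)\leq 0$ for every $g$ with $u^{\prime}_H(x^\ast,g)\leq 0$.

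Next I would substitute the lower-exhauster representations $f^{\prime}_H(x^\ast,g)=\max_{C\in E_\ast(f)}\min_{v\in C}\langle v,g\rangle$ and $u^{\prime}_H(x^\ast,g)=\max_{C\in E_\ast(u)}\min_{v\in C}\langle v,g\rangle$. The implication above then reads: whenever $\max_{C\in E_\ast(u)}\min_{v\in C}\langle v,g\rangle\leq 0$ we must have $\max_{C\in E_\ast(f)}\min_{v\in C}\langle v,g\rangle\leq 0$. Rewriting each maximum over a family as a universally quantified statement, this says precisely that for any $g$ satisfying $\min_{v\in C}\langle v,g\rangle\leq 0$ for all $C\in E_\ast(u)$, the analogous inequality $\min_{v\in C}\langle v,g\rangle\leq 0$ holds for all $C\in E_\ast(f)$.

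The decisive step is the geometric translation of each scalar inequality, which is the same device already used in the earlier proofs. For a fixed $C$, the inequality $\min_{v\in C}\langle v,g\rangle\leq 0$ means that $g$ fails the strict condition $\langle v,g\rangle>0$ for every $v\in C$, i.e.\ $g$ does not lie in the interior of the conjugate cone $K^+(C)=(\operatorname{cone}\{C\})^+$. Since for any set the closure of the complement equals the complement of the interior, this is equivalent to $g\in\operatorname{cl}\{\mathbb{R}^n\setminus K^+(C)\}$. Intersecting over the respective families, the $u$-side becomes $g\in\bigcap_{C\in E_\ast(u)}\operatorname{cl}\{\mathbb{R}^n\setminus K^+(C)\}$ and the $f$-side becomes $g\in\bigcap_{C\in E_\ast(f)}\operatorname{cl}\{\mathbb{R}^n\setminus K^+(C)\}$, so the implication is exactly the asserted inclusion (\ref{abbasov_th_max_cond_up_f_low_u_condition}).

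I expect the only genuine subtlety to be justifying $\operatorname{int}K^+(C)=\{g:\langle v,g\rangle>0\ \forall v\in C\}$ together with the identity $\operatorname{cl}\{\mathbb{R}^n\setminus K^+(C)\}=\mathbb{R}^n\setminus\operatorname{int}K^+(C)$. Both follow from $C$ being convex and compact, and the degenerate case $0_n\in C$ is harmless since then $\min_{v\in C}\langle v,g\rangle\leq 0$ holds for all $g$ while $K^+(C)$ has empty interior. These are exactly the facts taken for granted in the proof of Theorem \ref{abbasov_th_min_cond_up_f_low_u}, so no new obstacle arises relative to the minimum case; everything else is a routine bookkeeping translation of the minmax inequalities.
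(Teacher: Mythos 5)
Your argument is correct and is precisely the ``analogous'' proof the paper intends here: it transplants the proof of Theorem~\ref{abbasov_th_min_cond_up_f_low_u} to the maximum case via Theorem~\ref{ch_info_th_exstremum_DH_max} and the regularity condition, with the same geometric translation of $\min_{v\in C}\langle v,g\rangle\le 0$ into $g\in\operatorname{cl}\left\{\mathbb{R}^n\setminus K^+(C)\right\}$ on both the $u$-side and the $f$-side. (Only your closing side remark is slightly off: when $0_n\in C$ the cone $K^+(C)$ need not have empty interior --- e.g.\ $C=\operatorname{co}\{0_n,e_1\}$ gives $K^+(C)$ a half-space --- but this degenerate case is passed over in the paper's own proofs as well, so it is not a new gap introduced by your argument.)
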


\begin{theorem}\label{abbasov_th_max_cond_up_f_up_u}
Let the regularity condition holds at the point
$x^{\ast}\in\Omega$, families of sets $E_{\ast}(f)$ and
$E^{\ast}(u)$ be a lower and an upper exhausters in the sense of
Hadamard of the functions $f$ and $u$ at the point $x^\ast$
respectively. Then for the point $x^\ast$ to be a local maximum of
the function $f$ on the set $\Omega$ it is necessary that
\begin{equation}\label{abbasov_th_max_cond_up_f_up_u_condition}
\bigcup_{C\in E^{\ast}(u)} \left[-K^+({C})\right]\subset
\bigcap_{C\in
E_{\ast}(f)}\operatorname{cl}\left\{\mathbb{R}^n\setminus
K^+(C)\right\},
\end{equation}
where $K(C)=\operatorname{cone}\{C\}$ is the conic hull of the set
$C$, $K^+(C)$ is the conjugate cone of $K(C)$.
\end{theorem}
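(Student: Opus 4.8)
The plan is to follow the same template as the minimum theorems, but to start from the necessary condition for a maximum and to read off the exhauster representations appropriate to a lower exhauster of $f$ and an upper exhauster of $u$. First I would invoke Theorem~\ref{ch_info_th_exstremum_DH_max}: if $x^\ast$ is a local maximizer of $f$ on $\Omega$, then $f^{\prime}_H(x^\ast,g)\le 0$ for all $g\in K_{ad}(x^\ast)$. Since $K_{<}(x^\ast)\subset K_{ad}(x^\ast)$ and $f^{\prime}_H(x^\ast,\cdot)$ is continuous, the regularity condition $\operatorname{cl}\{K_{<}(x^\ast)\}=K_{\leq}(x^\ast)$ lets me pass to the limit and extend the inequality to the closed cone, obtaining $f^{\prime}_H(x^\ast,g)\le 0$ for all $g\in K_{\leq}(x^\ast)$, exactly as in the proof of Theorem~\ref{abbasov_th_min_cond_up_f_low_u}.

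Next I would substitute the exhauster representations into this inequality. Writing the lower exhauster of $f$ as $f^{\prime}_H(x^\ast,g)=\max_{C\in E_\ast(f)}\min_{v\in C}\langle v,g\rangle$ and the upper exhauster of $u$ as $u^{\prime}_H(x^\ast,g)=\min_{C\in E^\ast(u)}\max_{v\in C}\langle v,g\rangle$, the condition becomes: for every $g$ with $\min_{C\in E^\ast(u)}\max_{v\in C}\langle v,g\rangle\le 0$ one has $\max_{C\in E_\ast(f)}\min_{v\in C}\langle v,g\rangle\le 0$. The task then reduces to translating each of these two inequalities into the language of conjugate cones.

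For the constraint side I would argue as in Theorem~\ref{abbasov_th_min_cond_up_f_up_u}: the inequality $\min_{C\in E^\ast(u)}\max_{v\in C}\langle v,g\rangle\le 0$ holds precisely when there exists $\widetilde C\in E^\ast(u)$ with $\langle v,g\rangle\le 0$ for all $v\in\widetilde C$, i.e.\ when $g\in -K^+(\widetilde C)$; hence the set of admissible $g$ is exactly $\bigcup_{C\in E^\ast(u)}[-K^+(C)]$. For the objective side I would argue as in Theorem~\ref{abbasov_th_min_cond_up_f_low_u}: $\max_{C\in E_\ast(f)}\min_{v\in C}\langle v,g\rangle\le 0$ holds iff $\min_{v\in C}\langle v,g\rangle\le 0$ for every $C\in E_\ast(f)$, which means that for each such $C$ there is a $v\in C$ with $\langle v,g\rangle\le 0$, equivalently $g\notin\operatorname{int}K^+(C)$; since $\operatorname{int}K^+(C)=\mathbb{R}^n\setminus\operatorname{cl}\{\mathbb{R}^n\setminus K^+(C)\}$, this places $g\in\bigcap_{C\in E_\ast(f)}\operatorname{cl}\{\mathbb{R}^n\setminus K^+(C)\}$. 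Combining the two translations, the established implication is precisely the inclusion (\ref{abbasov_th_max_cond_up_f_up_u_condition}).

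The one genuinely delicate point, which I would borrow verbatim from the earlier proofs, is the equivalence between $\min_{v\in C}\langle v,g\rangle>0$ and $g\in\operatorname{int}K^+(C)$: this uses the compactness of $C$ (so that the minimum is attained) together with the topological identity $\mathbb{R}^n\setminus\operatorname{int}K^+(C)=\operatorname{cl}\{\mathbb{R}^n\setminus K^+(C)\}$. Everything else is a routine transcription of the minimum arguments with the inequality signs reversed, so I do not expect any new obstacle beyond keeping careful track of which exhauster, upper versus lower, produces a union of cones and which produces an intersection of complements.
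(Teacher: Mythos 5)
Your proposal is correct and is exactly the argument the paper intends: the paper omits explicit proofs of the maximum theorems, stating only that they follow analogously via Theorem~\ref{ch_info_th_exstremum_DH_max}, and your proof is precisely that analogue, combining the constraint-side translation of $\min\max\le 0$ into $\bigcup_{C\in E^{\ast}(u)}[-K^+(C)]$ from the proof of Theorem~\ref{abbasov_th_min_cond_up_f_up_u} with the objective-side translation of $\min_{v\in C}\langle v,g\rangle\le 0$ into $\operatorname{cl}\{\mathbb{R}^n\setminus K^+(C)\}$ from the proof of Theorem~\ref{abbasov_th_min_cond_up_f_low_u}. The delicate point you flag (the identification of $\min_{v\in C}\langle v,g\rangle>0$ with $g\in\operatorname{int}K^+(C)$) is inherited verbatim from the paper's own minimum proofs, so it introduces no new gap relative to the paper.
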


\begin{theorem}\label{abbasov_th_max_cond_low_f_low_u}
Let the regularity condition holds at the point
$x^{\ast}\in\Omega$, families of sets $E^{\ast}(f)$ and
$E_{\ast}(u)$ be an upper and a lower exhausters in the sense of
Hadamard of the functions $f$ and $u$ at the point $x^\ast$
respectively. Then for the point $x^\ast$ to be a local maximum of
the function $f$ on the set $\Omega$ it is necessary that
\begin{equation}\label{abbasov_th_max_cond_low_f_low_u_condition}
\bigcap_{C\in
E_{\ast}(u)}\operatorname{cl}\left\{\mathbb{R}^n\setminus
K^+({C})\right\}\subset \bigcup_{C\in E^{\ast}(f)}
\left[-K^+({C})\right],
\end{equation}
where $K(C)=\operatorname{cone}\{C\}$ is the conic hull of the set
$C$, $K^+(C)$ is the conjugate cone of $K(C)$.
\end{theorem}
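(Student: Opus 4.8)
The plan is to mirror the arguments used for the minimum-case theorems, but to start from the maximum necessary condition of Theorem~\ref{ch_info_th_exstremum_DH_max} rather than the minimum one. First I would invoke Theorem~\ref{ch_info_th_exstremum_DH_max}: if $x^\ast$ is a local maximizer of $f$ on $\Omega$, then $f^{\prime}_H(x^\ast,g)\leq 0$ for every $g\in K_{ad}(x^\ast)$. Since $K_{<}(x^\ast)\subset K_{ad}(x^\ast)$, this inequality holds in particular for all $g\in K_{<}(x^\ast)$. Invoking the regularity condition $\operatorname{cl}\{K_{<}(x^\ast)\}=K_{\leq}(x^\ast)$ together with the continuity of $f^{\prime}_H(x^\ast,\cdot)$ as a function of direction, I would pass to the closure: for any $g\in K_{\leq}(x^\ast)$ pick $g_k\in K_{<}(x^\ast)$ with $g_k\to g$, so that $f^{\prime}_H(x^\ast,g)=\lim_k f^{\prime}_H(x^\ast,g_k)\leq 0$. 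This yields $f^{\prime}_H(x^\ast,g)\leq 0$ for all $g\in K_{\leq}(x^\ast)$.

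Next I would substitute the exhauster representations. Writing $f^{\prime}_H$ through its upper exhauster $E^\ast(f)$ and $u^{\prime}_H$ through its lower exhauster $E_\ast(u)$, the statement becomes: the inequality $\min_{C\in E^\ast(f)}\max_{v\in C}\langle v,g\rangle\leq 0$ holds for every $g$ satisfying $\max_{C\in E_\ast(u)}\min_{v\in C}\langle v,g\rangle\leq 0$. I would then convert each of the two conditions into a geometric membership, exactly as in the proofs of Theorems~\ref{abbasov_th_min_cond_up_f_low_u} and \ref{abbasov_th_min_cond_up_f_up_u}.

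For the hypothesis side, the condition $\max_{C\in E_\ast(u)}\min_{v\in C}\langle v,g\rangle\leq 0$ is equivalent to $\min_{v\in C}\langle v,g\rangle\leq 0$ for every $C\in E_\ast(u)$; since $\operatorname{int}K^+(C)=\{g:\min_{v\in C}\langle v,g\rangle>0\}$ and $\operatorname{cl}\{\mathbb{R}^n\setminus K^+(C)\}=\mathbb{R}^n\setminus\operatorname{int}K^+(C)$, this places $g$ in $\bigcap_{C\in E_\ast(u)}\operatorname{cl}\{\mathbb{R}^n\setminus K^+(C)\}$, the left-hand side of the claimed inclusion. For the conclusion side, $\min_{C\in E^\ast(f)}\max_{v\in C}\langle v,g\rangle\leq 0$ means that there exists $\widetilde{C}\in E^\ast(f)$ with $\langle v,g\rangle\leq 0$ for all $v\in\widetilde{C}$, i.e. $-g\in K^+(\widetilde{C})$, equivalently $g\in -K^+(\widetilde{C})\subset\bigcup_{C\in E^\ast(f)}[-K^+(C)]$, the right-hand side. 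Combining these two translations yields precisely the inclusion (\ref{abbasov_th_max_cond_low_f_low_u_condition}).

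The substitution of the exhauster formulas and the cone/half-space bookkeeping are routine, being identical in spirit to the earlier proofs. The step I expect to require the most care is the passage through the regularity condition: one must verify that the sign condition $f^{\prime}_H(x^\ast,\cdot)\leq 0$ on the open cone $K_{<}(x^\ast)$ extends to its closure $K_{\leq}(x^\ast)$, which hinges on the continuity of the Hadamard derivative in the direction. A secondary point to check is the identification $\operatorname{int}K^+(C)=\{g:\min_{v\in C}\langle v,g\rangle>0\}$, since it can degenerate (for instance when $0_n\in C$ or when $\operatorname{cone}\{C\}$ is not full-dimensional); I would simply rely on the same convention used in the preceding proofs of the paper.
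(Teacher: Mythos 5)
Your proposal is correct and follows exactly the route the paper intends: it is the maximum analogue of the proofs of Theorems~\ref{abbasov_th_min_cond_up_f_low_u} and \ref{abbasov_th_min_cond_low_f_low_u}, starting from Theorem~\ref{ch_info_th_exstremum_DH_max}, passing to $K_{\leq}(x^\ast)$ via the regularity condition, and translating the two sign conditions into the cone memberships (the paper itself only states that these maximum results are proved ``analogously''). Your side remark about the possible degeneracy of $\operatorname{int}K^+(C)$ is well taken but harmless here, since the inclusion only requires the non-degenerate directions of the two equivalences, exactly as in the paper's own arguments.
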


\begin{theorem}\label{abbasov_th_max_cond_low_f_up_u}
Let the regularity condition holds at the point
$x^{\ast}\in\Omega$, families of sets $E^{\ast}(f)$ and
$E^{\ast}(u)$ be upper exhausters in the sense of Hadamard of the
functions $f$ and $u$ at the point $x^\ast$ respectively. Then for
the point $x^\ast$ to be a local maximum of the function $f$ on
the set $\Omega$ it is necessary that
\begin{equation}\label{abbasov_th_max_cond_low_f_up_u_condition}
\bigcup_{C\in E^{\ast}(u)} \left[-K^+({C})\right]\subset
\bigcup_{C\in E^{\ast}(f)} \left[-K^+({C})\right],
\end{equation}
where $K(C)=\operatorname{cone}\{C\}$ is the conic hull of the set
$C$, $K^+(C)$ is the conjugate cone of $K(C)$.
\end{theorem}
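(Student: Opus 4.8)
The plan is to mirror the derivation of Theorem~\ref{abbasov_th_min_cond_up_f_up_u}, replacing the minimization necessary condition by the maximization one supplied by Theorem~\ref{ch_info_th_exstremum_DH_max}. First I would invoke that theorem: if $x^\ast$ is a local maximum of $f$ on $\Omega$, then $f^{\prime}_H(x^\ast,g)\leq 0$ for all $g\in K_{ad}(x^\ast)$. Since $K_{<}(x^\ast)\subset K_{ad}(x^\ast)$ and $f^{\prime}_H(x^\ast,\cdot)$ is continuous (as $f$ is Hadamard-differentiable), the regularity condition $\operatorname{cl}\{K_{<}(x^\ast)\}=K_{\leq}(x^\ast)$ lets one restrict the inequality to $K_{<}(x^\ast)$ and then pass to the closure, obtaining $f^{\prime}_H(x^\ast,g)\leq 0$ for all $g\in K_{\leq}(x^\ast)$.

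Next I would rewrite both the hypothesis $g\in K_{\leq}(x^\ast)$ and the conclusion $f^{\prime}_H(x^\ast,g)\leq 0$ through the upper-exhauster representation (\ref{eqs3a1}). The membership $g\in K_{\leq}(x^\ast)$ reads $\min_{C\in E^\ast(u)}\max_{v\in C}\langle v,g\rangle\leq 0$, and, exactly as in the proof of Theorem~\ref{abbasov_th_min_cond_up_f_up_u}, this is equivalent to the existence of some $\widetilde{C}\in E^\ast(u)$ with $\langle v,g\rangle\leq 0$ for all $v\in\widetilde{C}$, that is, to $g\in\bigcup_{C\in E^\ast(u)}[-K^+(C)]$. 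This yields the left-hand side of (\ref{abbasov_th_max_cond_low_f_up_u_condition}), identical to the left-hand side in the companion minimum theorem.

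The only genuine difference lies in the treatment of $f$. Here the conclusion $\min_{C\in E^\ast(f)}\max_{v\in C}\langle v,g\rangle\leq 0$ is an \emph{upper} bound on an InfMax, so it is equivalent to the existence of a single $\widehat{C}\in E^\ast(f)$ with $\max_{v\in\widehat{C}}\langle v,g\rangle\leq 0$, i.e.\ to $g\in -K^+(\widehat{C})$ for some $\widehat{C}$, so $g\in\bigcup_{C\in E^\ast(f)}[-K^+(C)]$. This is the same ``InfMax $\leq 0$ produces a union of $-K^+$'' step already used for $u$, and it is exactly what takes the place of the intersection-of-complements appearing in the minimum case. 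Combining the two characterizations, the implication ``$g\in K_{\leq}(x^\ast)\Rightarrow f^{\prime}_H(x^\ast,g)\leq 0$'' becomes precisely the inclusion (\ref{abbasov_th_max_cond_low_f_up_u_condition}).

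I expect the only delicate point to be the equivalence between the analytic inequality $\max_{v\in C}\langle v,g\rangle\leq 0$ and the geometric statement $g\in -K^+(C)$. This rests on the fact that, for the conic hull $K(C)=\operatorname{cone}\{C\}$, one has $y\in K^+(C)$ if and only if $\langle y,v\rangle\geq 0$ for every generator $v\in C$; hence $\langle v,g\rangle\leq 0$ on all of $C$ is the same as $-g\in K^+(C)$. Attainment of the minimum in the InfMax is guaranteed by the total boundedness of the exhausters underlying representation (\ref{eqs3a1}), so no separate justification is needed there, and the remainder of the argument is a direct transcription of the steps in Theorems~\ref{abbasov_th_min_cond_up_f_up_u} and~\ref{abbasov_th_max_cond_up_f_up_u}.
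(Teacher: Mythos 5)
Your proposal is correct and follows exactly the route the paper intends: the paper proves the minimum counterpart (Theorem~\ref{abbasov_th_min_cond_up_f_up_u}) in detail and only remarks that the maximum conditions are proved ``analogously via Theorem~\ref{ch_info_th_exstremum_DH_max}''; your argument is precisely that analogue, including the key equivalence $\min_{C\in E^\ast}\max_{v\in C}\langle v,g\rangle\leq 0 \Leftrightarrow g\in\bigcup_{C\in E^\ast}[-K^+(C)]$ applied to both $u$ and $f$. No gaps.
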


\section{An illustrative example}
Consider a function 
$f(x)=|x_1|-|x_2|$ on a set $\Omega=\left\{x\in\mathbb{R}^2\mid
u(x)\leq 0\right\}$ at a point $x_\ast=(0,0)$, where

\begin{figure}[h]
\center{\includegraphics[scale=0.6]{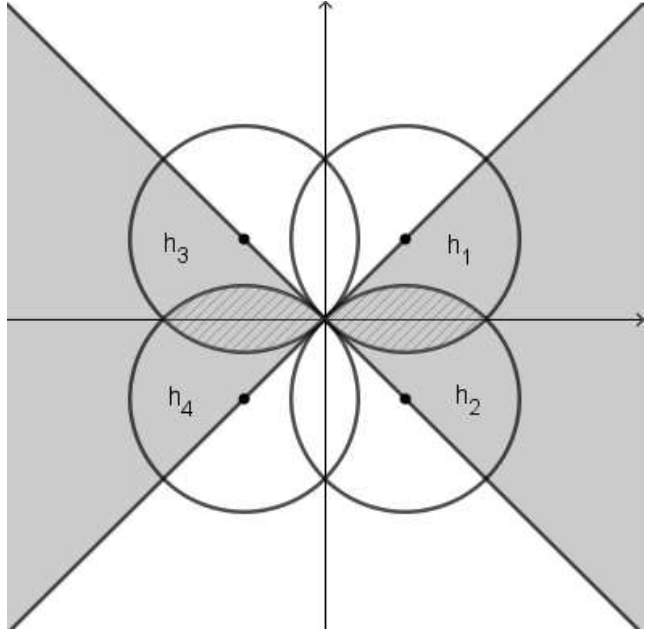}
\caption{The set $\Omega$ (hatched area) and the cone
$K_{ad}(x_\ast)$ (shaded area).}}
\label{example_exh_func_on_exh_set_feasible_set_cone}
\end{figure}

\begin{equation*}
\begin{split}
u(x)&=\min\left\{\max\{
h_1(x),h_2(x)\},\max\{
h_3(x),h_4(x)\}\right\}=\\&=\max\left\{\min\{
h_1(x),h_3(x)\},\max\{ h_2(x),h_4(x)\}\right\},
\end{split}
\end{equation*}
\begin{equation*}
\begin{split}
h_1(x)=\frac{1}{2}\left[(x_1-1)^2+(x_2-1)^2\right]-1, \quad
h_2(x)=\frac{1}{2}\left[(x_1-1)^2+(x_2+1)^2\right]-1,\\
h_3(x)=\frac{1}{2}\left[(x_1+1)^2+(x_2-1)^2\right]-1, \quad
h_4(x)=\frac{1}{2}\left[(x_1+1)^2+(x_2+1)^2\right]-1.
\end{split}
\end{equation*}
%

The functions $f$ and $u$ are Dini-directionally differentiable at
the point $x_\ast$ and Lipschitz, therefore they are directionally
differentiable in the sense of Hadamard \cite{Demyanov-dr82}. It
is also obvious that regularity condition holds at the point
$x_\ast$.

Denote
\begin{equation*}
\begin{split}
C_1=\operatorname{co}\{(1,1);(-1,1)\}, \quad
C_2=\operatorname{co}\{(1,-1);(-1,-1)\},\\
C_3=\operatorname{co}\{(1,1);(1,-1)\}, \quad
C_4=\operatorname{co}\{(-1,1);(-1,-1)\}.
\end{split}
\end{equation*}
Then the following families are exhausters of the functions $f$
and $u$ at the point $x_\ast$:
\begin{equation*}
\begin{split}
E^\ast(f)=E_\ast(u)=\left\{C_1,C_2\right\},\quad
E_\ast(f)=E^\ast(u)=\left\{C_3,C_4\right\}.
\end{split}
\end{equation*}

\begin{figure}[h]
\center{
\begin{minipage}[h]{0.49\linewidth}
\center{\includegraphics[width=1.0 \linewidth]{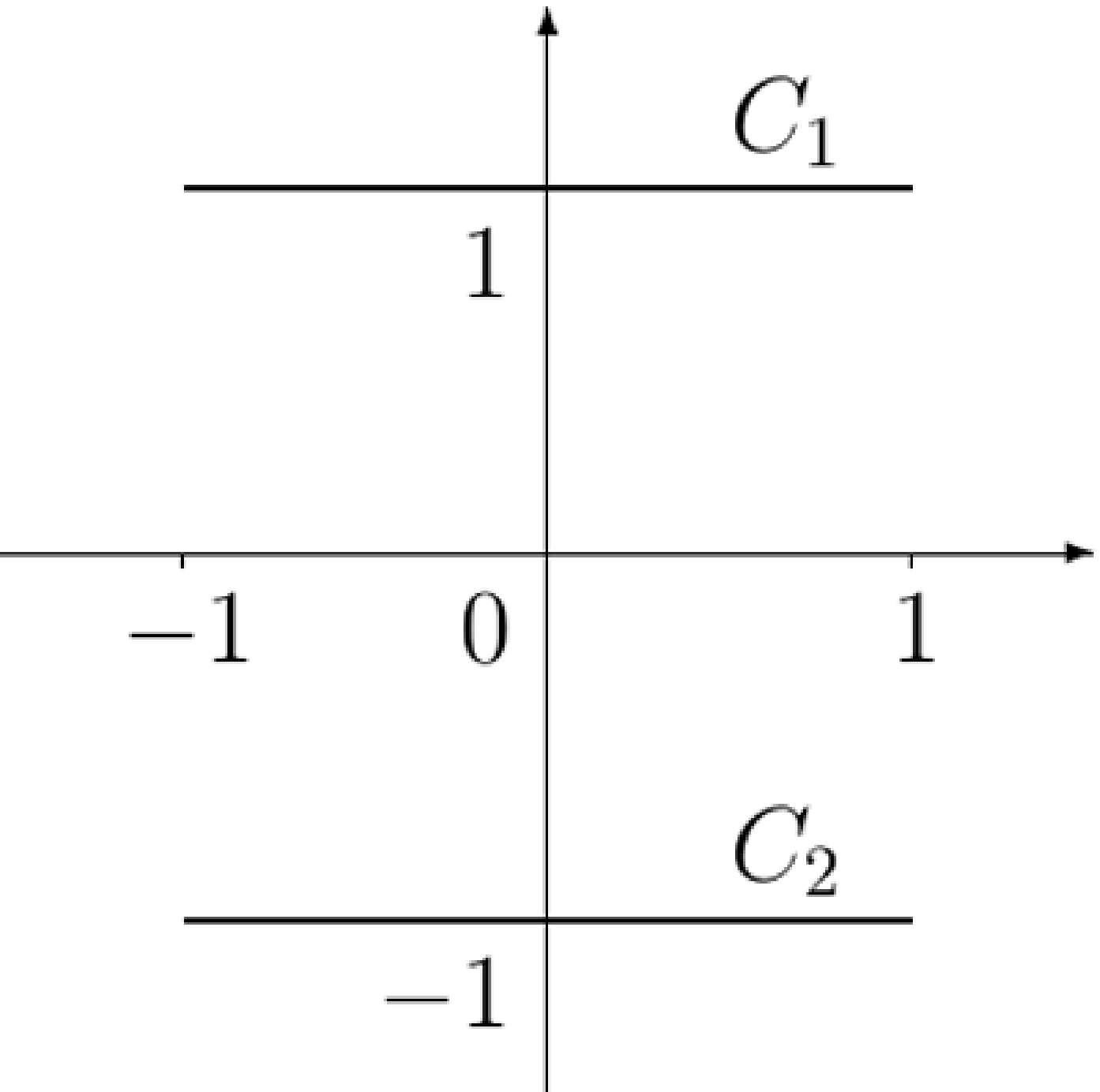} \\
a}
\end{minipage}
\hfill
\begin{minipage}[h]{0.49\linewidth}
\center{\includegraphics[width=1.0 \linewidth]{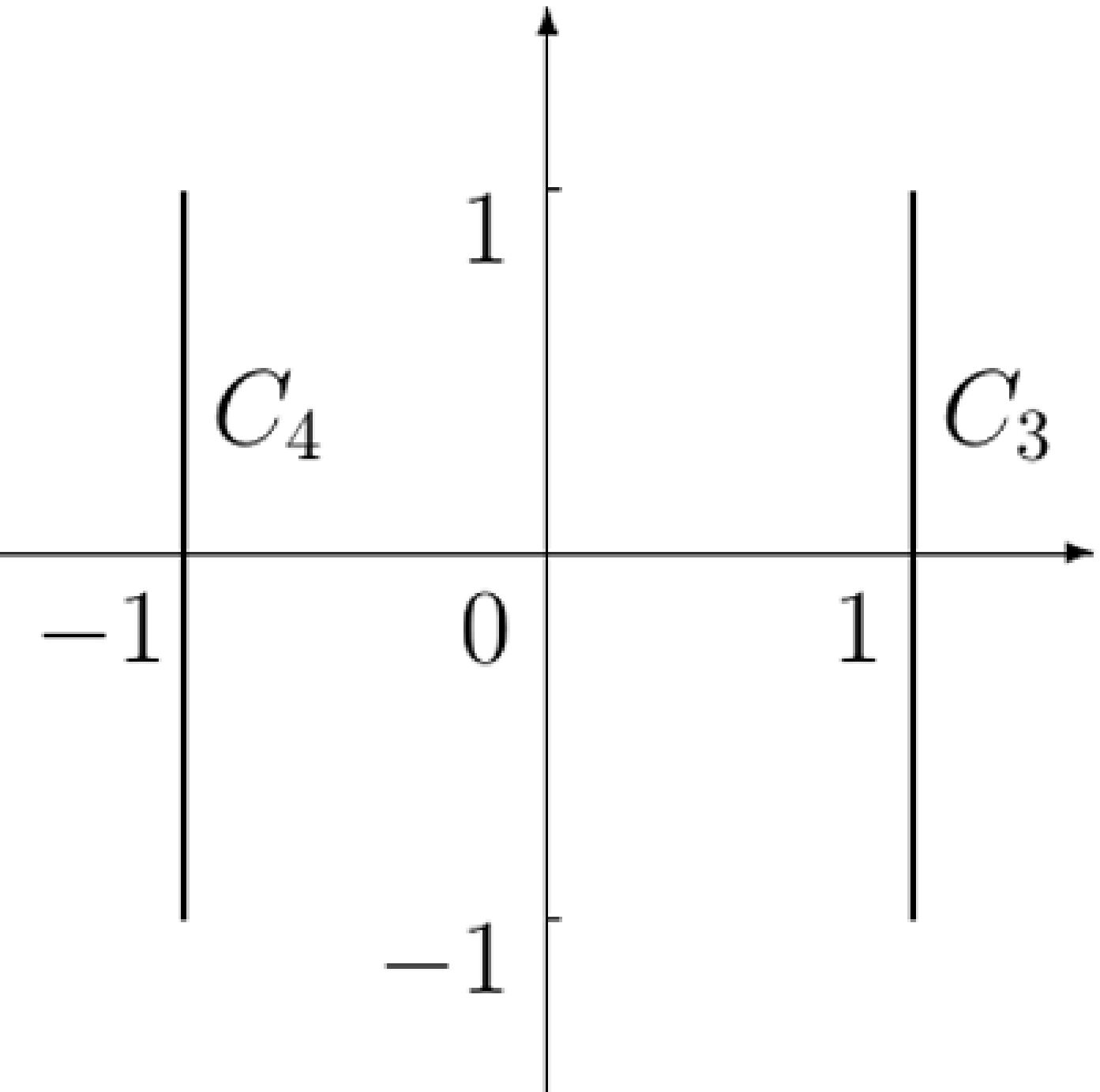} \\
b}
\end{minipage}
\caption{Sets that form exhausters of the functions $f$ and $u$ at
the point
$x_0\ast$.}\label{example_exh_func_on_exh_set_sets_from_exhauters}}
\end{figure}

First check minimality conditions in terms of proper exhauster.
Condition (\ref{abbasov_th_min_cond_up_f_low_u_condition}) from
Theorem \ref{abbasov_th_min_cond_up_f_low_u} is satisfied since
$$\bigcap_{C\in
E_{\ast}(u)}\operatorname{cl}\left\{\mathbb{R}^n\setminus
K^+(C)\right\}=\bigcap_{C\in
E^{\ast}(f)}\operatorname{cl}\left\{\mathbb{R}^n\setminus
\left(-K^+(C)\right)\right\}=K(C_3)\bigcup K(C_4).$$

Condition (\ref{abbasov_th_min_cond_up_f_up_u_condition}) from
Theorem \ref{abbasov_th_min_cond_up_f_up_u} also holds:
$$\bigcup_{C\in E^{\ast}(u)} \left[-K^+({C})\right]=
\bigcap_{C\in
E^{\ast}(f)}\operatorname{cl}\left\{\mathbb{R}^n\setminus
\left(-K^+(C)\right)\right\}=K(C_3)\bigcup K(C_4).$$

Now pass to the minimality conditions in terms of adjoint
exhauster.

We have
$$\bigcap_{C\in
E_{\ast}(u)}\operatorname{cl}\left\{\mathbb{R}^n\setminus
K^+({C})\right\}=\bigcup_{C\in E_{\ast}(f)} K^+({C})=K(C_3)\bigcup
K(C_4),$$ therefore condition
(\ref{abbasov_th_min_cond_low_f_low_u_condition}) from Theorem
\ref{abbasov_th_min_cond_low_f_low_u} is fulfilled.

Condition (\ref{abbasov_th_min_cond_low_f_up_u_condition}) from
Theorem \ref{abbasov_th_min_cond_low_f_up_u} also holds:
$$\bigcup_{C\in E^{\ast}(u)} \left[-K^+({C})\right]=
\bigcup_{C\in E_{\ast}(f)} K^+({C})=K(C_3)\bigcup K(C_4).$$

\section*{Conclusion}
We derived new optimality conditions for an exhausterable function
on an exhausterable set and showed how they can be applied to
practical problems.

It should be noted that constrained optimality conditions for an
exhausterable function on an abstractive theoretical cone were
provided in \cite{Demyanov-dros05}. But these results were only
the first step, as one of the most important problem of
constructing such a cone in specific cases remained open. In the
present paper we considered the case when a feasible set is given
via an exhausterable function, described Bouligand cone in terms
of exhausters of this function and therefore got optimality
conditions in terms of these exhausters. Obtained results can be
applied to a wide class of nondifferentiable optimization
problems.

\section*{Acknowledgements}
The reported study was supported by Russian Science Foundation,
research project  No. 18-71-00006.


\end{document}